\newtheorem{thm}{Theorem}[section]
\newtheorem{lem}[thm]{Lemma}
\newtheorem{cor}[thm]{Corollary}
\newtheorem{pro}[thm]{Proposition}
\newtheorem{ex}[thm]{Example}
\theoremstyle{definition}
\newtheorem{rmk}[thm]{Remark}
\newtheorem{defi}[thm]{Definition}
\newcommand{\nc}{\newcommand}
\newcommand{\delete}[1]{}
\nc{\mlabel}[1]{\label{#1}}  
\nc{\mcite}[1]{\cite{#1}}  
\nc{\mref}[1]{\ref{#1}}  
\nc{\mbibitem}[1]{\bibitem{#1}} 
\nc{\mlabel}[1]{\label{#1}{\hfill \hspace{1cm}{\bf{{\ }\hfill(#1)}}}}
\nc{\mcite}[1]{\cite{#1}{{\em{{\ }(#1)}}}}  
\nc{\mref}[1]{\ref{#1}{{\em{{\ }(#1)}}}}  
\nc{\mbibitem}[1]{\bibitem[\em #1]{#1}} 
\newcommand {\emptycomment}[1]{}
\nc{\oprn}{\theta}
\nc{\Oprn}{\Theta}
\nc{\calo}{\mathcal{O}}
\nc{\oop}{$\mathcal{O}$-operator\xspace}
\nc{\oops}{$\mathcal{O}$-operators\xspace}
\nc{\mrho}{{\bm{\varrho}}}
\nc{\emk}{\mathbf{K}}
\nc{\invlim}{\displaystyle{\lim_{\longleftarrow}}\,}
\nc{\ot}{\otimes}
\newcommand{\be }{\begin{equation}}
\newcommand{\ee }{\end{equation}}
\newcommand{\g}{\mathfrak g}
\newcommand{\h}{\mathfrak h}
\newcommand{\m}{\mathfrak m}
\newcommand{\huaB}{\mathcal{B}}
\newcommand{\huaH}{\mathcal{H}}
\newcommand{\huaO}{{\mathcal{O}}}
\newcommand{\huaZ}{\mathcal{Z}}
\newcommand{\frkC}{\mathfrak C}
\newcommand{\frkH}{\mathfrak H}
\newcommand{\frkL}{\mathfrak L}
\newcommand{\frkR}{\mathfrak R}
\newcommand{\frkX}{\mathfrak X}
\newcommand{\Courant}[1]{\left\llbracket  #1\right\rrbracket }
\newcommand{\Id}{{\rm{Id}}}
\newcommand{\br}[1]{   [ \cdot,    \cdot  ]   }
\newcommand{\ltp}[1]{\Courant{\cdot,\cdot,\cdot}}
\newcommand{\Hom}{\mathrm{Hom}}
\newcommand{\Ob}{\mathsf{Ob}}
\newcommand{\gl}{\mathfrak {gl}}
\newcommand{\ad}{\mathrm{ad}}
\nc{\CV}{\mathbf{C}}
\newcommand{\LYA}{Lie-Yamaguti algebra}
\begin{document}

\title[Cohomology and deformations of crossed homomorphisms]{Cohomology and deformations of crossed homomorphisms between Lie-Yamaguti algebras}

\author{Jia Zhao}
\address{Jia Zhao, School of Sciences, Nantong University, Nantong, 226019, Jiangsu, China}
\email{zhaojia@ntu.edu.cn}

\author{Yu Qiao}
\address{Yu Qiao, School of Mathematics and Statistics, Shaanxi Normal University, Xi'an, 710119, Shaanxi, China}
\email{yqiao@snnu.edu.cn}

\author{Senrong Xu*}
\address{Senrong Xu (corresponding author), School of Mathematical Sciences, Jiangsu University, Zhenjiang, 212013, Jiangsu, China}
\email{senrongxu@ujs.edu.cn}

\date{\today}



\begin{abstract}
 In this paper, we introduce the notion of crossed homomorphisms between Lie-Yamaguti algebras and establish the cohomology theory of crossed homomorphisms via the Yamaguti cohomology. Consequently, we use this cohomology to characterize linear deformations of crossed homomorphisms between Lie-Yamaguti algebras. We show that if two linear or formal deformations of a crossed homomorphism are equivalent, then their infinitesimals are in the same cohomology class in the first cohomology group. Moreover, we show that an order $n$ deformation of a crossed homomorphism can be extended  to an order $n+1$ deformation if and only if the obstruction class in the second cohomology group is trivial.
\end{abstract}



\maketitle

\keywords{{\em Keywords}: Lie-Yamaguti algebra, crossed homomorphism, cohomology, deformation}

\smallskip

\subjclass{\em Mathematics Subject Classification} (2020): 17A36, 17B55, 17B60

\tableofcontents

\allowdisplaybreaks

 \section{Introduction}
 Mathematical physics has many branches in mathematics and can be applied in Lie theory and representation theory \cite{IK,LiuPeiXia}. Deformation theory plays an important role in mathematics and mathematical physics. A deformation of a mathematical object, roughly speaking, means that it preserves its original structure after a parameter perturbation. In physics, deformation theory comes from quantizing classical mechanics, and this idea promotes some researches on quantum groups in mathematics \cite{CP,Hart}. Recently, deformation quantization has produced many elegant works in the context of mathematical physics. Based on work of complex analysis by Kodaira and Spencer \cite{Kodaira}, deformation theory was generalized in algebra \cite{Hart}. Deformation of algebra can be dated back to works on associative algebra by Gerstenhaber \cite{Gerstenhaber1,Gerstenhaber2,Gerstenhaber3,Gerstenhaber4,Gerstenhaber5}. Later, Nijenhuis and Richadson studied deformations on Lie algebra \cite{Nij1}. Balavoine generalized deformation theory to operads \cite{bala}.

 In the context of algebras, deformation has close connection with cohomology. For instance, a suitable cohomology can be used to characterize deformations. In particular, a linear deformation of Lie algebras is controlled by a second cohomolpogy group; an order $n$ deformation can be extended to an order $n+1$ deformation if and only if its obstruction class is trivial; a trivial deformation gives rise to a Nijenhuis operator \cite{Dor}, which is key to deformation theory and has applications in integrability of constructing biHamiltonian systems \cite{Dor}. Sheng and his collaborators have a series of woks on deformation theory on (3-)Lie algebras. For example, they studied deformations on 3-Lie algebras and even $n$-Lie algebras \cite{LSZB} and examined product and complex structures on $3$-Lie algebras using Nijenhuis operators \cite{T.S}. Moreover, they construct a controlling algebra that characterizes deformations of relative Rota-Baxter operators (also called $\huaO$-operators) on Lie algebras,  on $3$-Lie algebras, and on Leibniz algebras respectively \cite{TBGS,THS,T.S2}. Recently, Pei and his colleagues established crossed homomorphisms on Lie algebras via the same methods, and generalized constructions of many kinds of Lie algebras by using bifunctors \cite{PSTZ}. Besides, the first two authors investigated cohomology and linear deformations of $\mathsf{LieYRep}$ pairs and explored several properties of relative Rota-Baxter-Nijenhuis structures on $\mathsf{LieYRep}$ pairs in \cite{ZQ1}, and cohomology and deformations of relative Rota-Baxter operators on \LYA s in \cite{ZQ2}.

 \subsection{Lie-Yamaguti algebras}
 A Lie-Yamaguti algebra is a generalization of a Lie algebra and a Lie triple system which can be traced back to Nomizu's work on the affine invariant connections on homogeneous spaces in 1950's \cite{Nomizu}. Later in 1960's, Yamaguti introduced an algebraic structure and called it a general Lie triple system or a Lie triple algebra \cite{Yamaguti1,Yamaguti2,Yamaguti3}. Kinyon and Weinstein first called this object a \LYA~  when studying Courant algebroids in the earlier 21st century \cite{Weinstein}. Since then, this system was called a \LYA, which has attracted much attention and is widely investigated recently. For instance, Benito and his collaborators deeply explored irreducible Lie-Yamaguti algebras and their relations with orthogonal Lie algebras \cite{B.B.M,B.D.E,B.E.M1,B.E.M2}. Deformations and extensions of Lie-Yamaguti algebras were examined in \cite{L.CHEN,Ma Y,Zhang1,Zhang2}. Sheng, the first author, and Zhou analyzed product structures and complex structures on Lie-Yamaguti algebras by means of Nijenhuis operators in \cite{Sheng Zhao}. Takahashi studied modules over quandles using representations of Lie-Yamaguti algebras in \cite{Takahashi}.

 \subsection{Crossed homomorphisms}
 The notion of crossed homomorphisms on Lie algebras was introduced when nonabelian extension of Lie algebras was studied \cite{Lue}. An example of crossed homomorphisms is a differential operator of weight $1$, and a flat connection 1-form of a trivial principle bundle is also a crossed homomorphism. In \cite{PSTZ}, authors showed that the category of weak representations (resp. admissible representations) of Lie-Rinehart algebras (resp. Leibniz pairs) is a left module category over the monoidal category of representations of Lie algebras using crossed homomorphisms. Later, cohomology and deformations of crossed homomorphisms on $3$-Lie algebras were also studied in \cite{HHSZ}. Thus it is natural to consider cohomology and deformations of crossed homomorphisms between Lie-Yamaguti algebras.

 More precisely, for a crossed homomorphism $H:\g\longrightarrow\h$ from a \LYA ~$\g$ to another \LYA ~$\h$ with respect to an action $(\rho,\mu)$, the most important step is to establish the cohomology theory of $H$. Our strategy is as follows:  First we introduce linear maps $\rho_H:\g\longrightarrow\h$ and $\mu_H:\otimes^2\g\longrightarrow\h$ via $H$, and prove that $(\h;\rho_H,\mu_H)$ is a representation of $\g$ on the vector space $\h$. Consequently, we obtain a corresponding Yamaguti cohomology of \LYA ~$\g$ with coefficients in the representation $(\h;\rho_H,\mu_H)$. Note that Yamaguti cohomology stars from $1$-cochains. Thus the second step is to construct $0$-cochains and the corresponding coboundary maps, which is a difficulty to overcome. Once the cohomology theory is established,  we are able to explore the relationship between cohomology and deformations of crossed homomorphisms. For this purpose, we intend to investigate three kinds of deformations: linear, formal, and higher order deformations.

 Note that a \LYA ~can be reduced to a Lie triple system when the ternary bracket is trivial, thus the notion of crossed homomorphisms and the cohomology and deformation theory of those between Lie triple systems can be obtained directly from the present paper.

\subsection{Outline of the paper}
The paper is structured as follows. In Section 2, we recall some basic notions such as \LYA s, representations, and cohomology. In Section 3, we introduce the notion of crossed homomorphisms between \LYA s, and show that there is a one-to-one correspondence between crossed homomorphisms and \LYA ~homomorphisms. In Section 4, we establish the cohomology of crossed homomorphisms on \LYA s, and examine a functorial property of the cohomology theory. Finally in Section 5, we explore three kinds of deformations, and show that the infinitesimal of linear and formal deformations can be governed by cohomology and that the extension of a higher deformation is characterized by a special cohomology class. This is why we call this cohomology class the obstruction class.

In this paper, all vector spaces are assumed to be over a field $\mathbb{K}$ of characteristic $0$ and finite-dimensional.

\section{Preliminaries: Lie-Yamaguti algebras, representations and cohomology}
In this section, we recall some basic notions such as \LYA s, representations and their cohomology theories.
The notion of  Lie-Yamaguti algebras was introduced by  Yamaguti in \cite{Yamaguti1}.

\begin{defi}\cite{Weinstein}\label{LY}
A {\bf \LYA} is a vector space $\g$ equipped with a bilinear bracket $[\cdot,\cdot]:\wedge^2  \mathfrak{g} \to \mathfrak{g} $ and a trilinear bracket $\Courant{\cdot,\cdot,\cdot}:\wedge^2\g \otimes  \mathfrak{g} \to \mathfrak{g} $, which meet the following conditions: for all $x,y,z,w,t \in \g$,
\begin{eqnarray}
~ &&\label{LY1}[[x,y],z]+[[y,z],x]+[[z,x],y]+\Courant{x,y,z}+\Courant{y,z,x}+\Courant{z,x,y}=0,\\
~ &&\label{LY2}\Courant{[x,y],z,w}+\Courant{[y,z],x,w}+\Courant{[z,x],y,w}=0,\\
~ &&\label{LY3}\Courant{x,y,[z,w]}=[\Courant{x,y,z},w]+[z,\Courant{x,y,w}],\\
~ &&\Courant{x,y,\Courant{z,w,t}}=\Courant{\Courant{x,y,z},w,t}+\Courant{z,\Courant{x,y,w},t}+\Courant{z,w,\Courant{x,y,t}}.\label{fundamental}
\end{eqnarray}
In the sequel, we denote a \LYA ~by $(\g,\br,\ltp))$.
\end{defi}

\begin{ex}
Let $(\g,\br))$ be a Lie algebra. Define a trilinear bracket
$$\ltp::\wedge^2\g\ot \g\to \g$$
by
$$\Courant{x,y,z}:=[[x,y],z],\quad \forall x,y,z \in \g.$$
Then by a direct computation, we know that $(\g,\br,\ltp))$ forms a \LYA.
\end{ex}

The following example is even more interesting.
\begin{ex}
Let $M$ be a closed manifold with an affine connection, and denote by $\frkX(M)$ the set of vector fields on $M$. For all $x, y, z \in \frkX(M) $, set
\begin{eqnarray*}
[x,y]&:=&-T(x,y),\\
\Courant{x,y,z}&:=&-R(x,y)z,
\end{eqnarray*}
where $T$ and $R$ are torsion tensor and curvature tensor respectively. It turns out that the triple
$ (\frkX(M),[\cdot,\cdot],\Courant{\cdot,\cdot,\cdot})$ forms a \LYA. See \cite{Nomizu} for more details.
\end{ex}
\emptycomment{
\begin{rmk}
Given a Lie-Yamaguti algebra $(\m,[\cdot,\cdot]_\m,\Courant{\cdot,\cdot,\cdot}_\m)$ and any two elements $x,y \in \m$, the linear map $D(x,y):\m \to \m,~z\mapsto D(x,y)z=\Courant{x,y,z}_\m$ is an (inner) derivation. Moreover, let $D(\m,\m)$ be the linear span of the inner derivations. Consider the vector space $\g(\m)=D(\m,\m)\oplus \m$, and endow it with a Lie bracket as follows: for all $x,y,z,t \in \m$
\begin{eqnarray*}
[D(x,y),D(z,t)]_{\g(\m)}&=&D(\Courant{x,y,z}_\m,t)+D(z,\Courant{x,y,t}_\m),\\
~[D(x,y),z]_{\g(\m)}&=&D(x,y)z=\Courant{x,y,z}_\m,\\
~[z,t]_{\g(\m)}&=&D(z,t)+[z,t]_\m.
\end{eqnarray*}
Then $(\g(\m),[\cdot,\cdot]_{\g(\m)})$ becomes a Lie algebra.
\end{rmk}}

Next, we recall the notion of representations of \LYA s.

\begin{defi}\cite{Yamaguti2}\label{defi:representation}
Let $(\g,[\cdot,\cdot],\Courant{\cdot,\cdot,\cdot})$ be a Lie-Yamaguti algebra. A {\bf representation} of $\g$ is a vector space $V$ equipped with a linear map $\rho:\g \to \gl(V)$ and a bilinear map $\mu:\otimes^2 \g \to \gl(V)$, which meet the following conditions: for all $x,y,z,w \in \g$,
\begin{eqnarray}
~&&\label{RLYb}\mu([x,y],z)-\mu(x,z)\rho(y)+\mu(y,z)\rho(x)=0,\\
~&&\label{RLYd}\mu(x,[y,z])-\rho(y)\mu(x,z)+\rho(z)\mu(x,y)=0,\\
~&&\label{RLYe}\rho(\Courant{x,y,z})=[D_{\rho,\mu}(x,y),\rho(z)],\\
~&&\label{RYT4}\mu(z,w)\mu(x,y)-\mu(y,w)\mu(x,z)-\mu(x,\Courant{y,z,w})+D_{\rho,\mu}(y,z)\mu(x,w)=0,\\
~&&\label{RLY5}\mu(\Courant{x,y,z},w)+\mu(z,\Courant{x,y,w})=[D_{\rho,\mu}(x,y),\mu(z,w)],
\end{eqnarray}
where the bilinear map $D_{\rho,\mu}:\otimes^2\g \to \gl(V)$ is given by
\begin{eqnarray}
 D_{\rho,\mu}(x,y):=\mu(y,x)-\mu(x,y)+[\rho(x),\rho(y)]-\rho([x,y]), \quad \forall x,y \in \g.\label{rep}
 \end{eqnarray}
It is obvious that $D_{\rho,\mu}$ is skew-symmetric, and we write $D$ in the sequel without ambiguities.  We denote a representation of $\g$ by $(V;\rho,\mu)$.
\end{defi}

\begin{rmk}\label{rmk:rep}
Let $(\g,[\cdot,\cdot],\Courant{\cdot,\cdot,\cdot})$ be a Lie-Yamaguti algebra and $(V;\rho,\mu)$ a representation of $\g$. If $\rho=0$ and the Lie-Yamaguti algebra $\g$ reduces to a Lie tripe system $(\g,\Courant{\cdot,\cdot,\cdot})$,  then the representation reduces to that of the Lie triple system $(\g,\Courant{\cdot,\cdot,\cdot})$: $(V;\mu)$. If $\mu=0$, $D=0$ and the Lie-Yamaguti algebra $\g$ reduces to a Lie algebra $(\g,[\cdot,\cdot])$, then the representation reduces to that of the Lie algebra $(\g,[\cdot,\cdot])$: $(V;\rho)$. Hence a representation of a Lie-Yamaguti algebra is a natural generalization of that of a Lie algebra or of a Lie triple system.
\end{rmk}

By a direct computation, we have the following lemma.
\begin{lem}
Suppose that $(V;\rho,\mu)$ is a representation of a Lie-Yamaguti algebra $(\g,[\cdot,\cdot],\Courant{\cdot,\cdot,\cdot})$. Then the following equalities are satisfied:
\begin{eqnarray*}
\label{RLYc}&&D([x,y],z)+D([y,z],x)+D([z,x],y)=0;\\
\label{RLY5a}&&D(\Courant{x,y,z},w)+D(z,\Courant{x,y,w})=[D(x,y),D_{\rho,\mu}(z,w)];\\
~ &&\mu(\Courant{x,y,z},w)=\mu(x,w)\mu(z,y)-\mu(y,w)\mu(z,x)-\mu(z,w)D(x,y).\label{RLY6}
\end{eqnarray*}
\end{lem}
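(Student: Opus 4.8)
The plan is to prove all three identities by direct computation, expanding $D=D_{\rho,\mu}$ through its defining formula \eqref{rep} and then feeding in the representation axioms \eqref{RLYb}--\eqref{RLY5}, the Lie-Yamaguti axioms \eqref{LY1}--\eqref{fundamental}, and the Jacobi identity in $\gl(V)$. No structural insight is required beyond these relations; the whole argument is bookkeeping.

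\emph{First identity.} First I would expand $D([x,y],z)+D([y,z],x)+D([z,x],y)$ via \eqref{rep} into four cyclic sums, namely in $\mu(z,[x,y])$, in $\mu([x,y],z)$, in $[\rho([x,y]),\rho(z)]$, and in $\rho([[x,y],z])$. Next I would rewrite the first sum using \eqref{RLYd} and the second using \eqref{RLYb}, so that both become expressions in $\rho(\cdot)$ and the skew map $\nu(x,y):=\mu(y,x)-\mu(x,y)$; for the last sum I would use \eqref{LY1} to replace $\sum_{\mathrm{cyc}}[[x,y],z]$ by $-\sum_{\mathrm{cyc}}\Courant{x,y,z}$ and then \eqref{RLYe} to turn it into $\sum_{\mathrm{cyc}}[D(x,y),\rho(z)]$. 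Expanding $D(x,y)$ once more and discarding $\sum_{\mathrm{cyc}}[[\rho(x),\rho(y)],\rho(z)]=0$ by the Jacobi identity, one is left with $\sum_{\mathrm{cyc}}\bigl(\nu(x,y)\rho(z)-\rho(z)\nu(x,y)\bigr)$, and a short comparison of cyclic sums shows that everything cancels.

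\emph{Second identity.} After expanding $D(\Courant{x,y,z},w)+D(z,\Courant{x,y,w})$ by \eqref{rep}, I would collect the terms into three groups. The $\rho$-of-bracket terms $-\rho([\Courant{x,y,z},w])-\rho([z,\Courant{x,y,w}])$ equal $-\rho(\Courant{x,y,[z,w]})=-[D(x,y),\rho([z,w])]$ by \eqref{LY3} and \eqref{RLYe}. The $\rho$-commutator terms $[\rho(\Courant{x,y,z}),\rho(w)]+[\rho(z),\rho(\Courant{x,y,w})]$ equal $[D(x,y),[\rho(z),\rho(w)]]$ after \eqref{RLYe} and the Jacobi identity. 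And applying \eqref{RLY5} to both $\mu(z,\Courant{x,y,w})$ and $\mu(w,\Courant{x,y,z})$ collapses the remaining $\mu$-terms to $[D(x,y),\mu(w,z)-\mu(z,w)]$. Adding the three groups gives $[D(x,y),\,\mu(w,z)-\mu(z,w)+[\rho(z),\rho(w)]-\rho([z,w])]=[D(x,y),D(z,w)]$, which is the claim.

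\emph{Third identity.} This is immediate: \eqref{RLY5} gives $\mu(\Courant{x,y,z},w)=D(x,y)\mu(z,w)-\mu(z,w)D(x,y)-\mu(z,\Courant{x,y,w})$, while substituting $x\mapsto z,\ y\mapsto x,\ z\mapsto y$ in \eqref{RYT4} gives $D(x,y)\mu(z,w)-\mu(z,\Courant{x,y,w})=\mu(x,w)\mu(z,y)-\mu(y,w)\mu(z,x)$; combining the two yields the formula. The only subtlety anywhere is organizational --- keeping left and right multiplications straight in the first two identities, and choosing in the third the relabeling of \eqref{RYT4} that moves the ternary bracket out of the second slot of $\mu$, where the axiom puts it, into the first slot, where the identity needs it. I do not expect any genuine obstacle.
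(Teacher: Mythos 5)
Your proof is correct: all three identities do follow exactly as you describe, and I checked that the cyclic cancellation in the first identity, the three-group decomposition in the second, and the relabeling of \eqref{RYT4} in the third all work out. The paper offers no argument beyond the phrase ``by a direct computation,'' so your write-up is simply that computation carried out explicitly, i.e.\ the same approach.
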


\begin{ex}\label{ad}
Let $(\g,[\cdot,\cdot],\Courant{\cdot,\cdot,\cdot})$ be a Lie-Yamaguti algebra. We define linear maps $\ad:\g \to \gl(\g)$ and $\frkR :\otimes^2\g \to \gl(\g)$ by $x \mapsto \ad_x$ and $(x,y) \mapsto \mathfrak{R}_{x,y}$ respectively, where $\ad_xz=[x,z]$ and $\mathfrak{R}_{x,y}z=\Courant{z,x,y}$ for all $z \in \g$. Then $(\ad,\mathfrak{R})$ forms a representation of $\g$ on itself, where $\frkL:= D_{\ad,\frkR}$ is given by
\begin{eqnarray*}
\frkL_{x,y}=\mathfrak{R}_{y,x}-\mathfrak{R}_{x,y}+[\ad_x,\ad_y]-\ad_{[x,y]}, \quad \forall x,y \in \g.
\end{eqnarray*}
By \eqref{LY1}, we have
\begin{eqnarray*}
\frkL_{x,y}z=\Courant{x,y,z}, \quad \forall z \in \g.\label{lef}
\end{eqnarray*}
In this case, $(\g;\ad,\frkR)$ is called the {\bf adjoint representation} of $\g$.
\end{ex}
\emptycomment{
The representations of Lie-Yamaguti algebras can be characterized by the semidirect Lie-Yamaguti algebras. This fact is revealed via the following proposition.

\begin{pro}\cite{Zhang1}
Let $(\g,[\cdot,\cdot],\Courant{\cdot,\cdot,\cdot})$ be a Lie-Yamaguti algebra and $V$ a vector space. Let $\rho:\g \to \gl(V)$ and $\mu:\otimes^2 \g \to \gl(V)$ be linear maps. Then $(V;\rho,\mu)$ is a representation of $(\g,[\cdot,\cdot],\Courant{\cdot,\cdot,\cdot})$ if and only if there is a Lie-Yamaguti algebra structure $([\cdot,\cdot]_{\rho,\mu},\Courant{\cdot,\cdot,\cdot}_{\rho,\mu})$ on the direct sum $\g \oplus V$ which is defined by for all $x,y,z \in \g, ~u,v,w \in V$,
\begin{eqnarray}
\label{semi1}[x+u,y+v]_{\rho,\mu}&=&[x,y]+\rho(x)v-\rho(y)u,\\
\label{semi2}~\Courant{x+u,y+v,z+w}_{\rho,\mu}&=&\Courant{x,y,z}+D_{\rho,\mu}(x,y)w+\mu(y,z)u-\mu(x,z)v,
\end{eqnarray}
where $D_{\rho,\mu}$ is given by \eqref{rep}.
This Lie-Yamaguti algebra $(\g \oplus V,[\cdot,\cdot]_{\rho,\mu},\Courant{\cdot,\cdot,\cdot}_{\rho,\mu})$ is called the {\bf semidirect product Lie-Yamaguti algebra}, and is denoted by $\g \ltimes_{\rho,\mu} V$.
\end{pro}
\begin{proof}
The proof is a direct computation, so we omit the details. Or one can see \cite{Zhang1} for more details.
\end{proof}}

Let us recall the cohomology theory on Lie-Yamaguti algebras given in \cite{Yamaguti2}. Let $(\g,[\cdot,\cdot],\Courant{\cdot,\cdot,\cdot})$ be a  Lie-Yamaguti algebra and $(V;\rho,\mu)$ a representation of $\g$. We denote the set of $p$-cochains by $C^p_{\rm LieY}(\g,V)~(p \geqslant 1)$, where
\begin{eqnarray*}
C^{n+1}_{\rm LieY}(\g,V)\triangleq
\begin{cases}
\Hom(\underbrace{\wedge^2\g\otimes \cdots \otimes \wedge^2\g}_n,V)\times \Hom(\underbrace{\wedge^2\g\otimes\cdots\otimes\wedge^2\g}_{n}\otimes\g,V), & \forall n\geqslant 1,\\
\Hom(\g,V), &n=0.
\end{cases}
\end{eqnarray*}

In the sequel, we recall the coboundary map of $p$-cochains:
\begin{itemize}
\item If $n\geqslant 1$, for any $(f,g)\in C^{n+1}_{\rm LieY}(\g,V)$, the coboundary map
$$\delta=(\delta_{\rm I},\delta_{\rm II}):C^{n+1}_{\rm LieY}(\g,V)\to C^{n+2}_{\rm LieY}(\g,V),$$
$$\qquad \qquad\qquad \qquad\qquad \quad (f,g)\mapsto(\delta_{\rm I}(f,g),\delta_{\rm II}(f,g))$$
 is given as follows:
\begin{eqnarray}
~\nonumber &&\Big(\delta_{\rm I}(f,g)\Big)(\frkX_1,\cdots,\frkX_{n+1})\\
~ &=&(-1)^n\Big(\rho(x_{n+1})g(\frkX_1,\cdots,\frkX_n,y_{n+1})-\rho(y_{n+1})g(\frkX_1,\cdots,\frkX_n,x_{n+1})\label{cohomology1}\\
~\nonumber &&\quad\quad-g(\frkX_1,\cdots,\frkX_n,[x_{n+1},y_{n+1}])\Big)\\
~\nonumber &&+\sum_{k=1}^{n}(-1)^{k+1}D(\frkX_k)f(\frkX_1,\cdots,\widehat{\frkX_k},\cdots,\frkX_{n+1})\\
~\nonumber &&+\sum_{1\leqslant k<l\leqslant n+1}(-1)^{k}f(\frkX_1,\cdots,\widehat{\frkX_k},\cdots,\frkX_k\circ\frkX_l,\cdots,\frkX_{n+1}),
\end{eqnarray}
\begin{eqnarray}
~\nonumber &&\Big(\delta_{\rm II}(f,g)\Big)(\frkX_1,\cdots,\frkX_{n+1},z)\\
~ &=&(-1)^n\Big(\mu(y_{n+1},z)g(\frkX_1,\cdots,\frkX_n,x_{n+1})-\mu(x_{n+1},z)g(\frkX_1,\cdots,\frkX_n,y_{n+1})\Big)\label{cohomology2}\\
~\nonumber &&+\sum_{k=1}^{n+1}(-1)^{k+1}D(\frkX_k)g(\frkX_1,\cdots,\widehat{\frkX_k},\cdots,\frkX_{n+1},z)\\
~\nonumber &&+\sum_{1\leqslant k<l\leqslant n+1}(-1)^kg(\frkX_1,\cdots,\widehat{\frkX_k},\cdots,\frkX_k\circ\frkX_l,\cdots,\frkX_{n+1},z)\\
~\nonumber &&+\sum_{k=1}^{n+1}(-1)^kg(\frkX_1,\cdots,\widehat{\frkX_k},\cdots,\frkX_{n+1},\Courant{x_k,y_k,z}),
\end{eqnarray}
where $\frkX_i=x_i\wedge y_i\in\wedge^2\g~(i=1,\cdots,n+1),~z\in \g$ and $\frkX_k\circ\frkX_l:=\Courant{x_k,y_k,x_l}\wedge y_l+x_l\wedge\Courant{x_k,y_k,y_l}$.

\item If $n=0$, for any $f \in C^1_{\rm LieY}(\g,V)$, the coboundary map
$$\delta:C^1_{\rm LieY}(\g,V)\to C^2_{\rm LieY}(\g,V),$$
$$\qquad \qquad \qquad f\mapsto (\delta_{\rm I}(f),\delta_{\rm II}(f))$$
is defined to be
\begin{eqnarray}
\label{cohomology3}\Big(\delta_{\rm I}(f)\Big)(x,y)&=&\rho(x)f(y)-\rho(y)f(x)-f([x,y]),\\
~ \label{cohomology4}\Big(\delta_{\rm II}(f)\Big)(x,y,z)&=&D(x,y)f(z)+\mu(y,z)f(x)-\mu(x,z)f(y)-f(\Courant{x,y,z}),\quad \forall x,y, z\in \g.
\end{eqnarray}
\end{itemize}

Yamaguti showed the following fact.

\begin{pro}{\rm \cite{Yamaguti2}}
 With the notations above, for any $f\in C^1_{\rm LieY}(\g,V)$, we have
 \begin{eqnarray*}
 \delta_{\rm I}\Big(\delta_{\rm I}(f)),\delta_{\rm II}(f)\Big)=0\quad {\rm and} \quad\delta_{\rm II}\Big(\delta_{\rm I}(f)),\delta_{\rm II}(f)\Big)=0.
 \end{eqnarray*}
 Moreover, for all $(f,g)\in C^p_{\rm LieY}(\g,V),~(p\geqslant 2)$, we have
  \begin{eqnarray*}
  \delta_{\rm I}\Big(\delta_{\rm I}(f,g)),\delta_{\rm II}(f,g)\Big)=0\quad{\rm and} \quad \delta_{\rm II}\Big(\delta_{\rm I}(f,g)),\delta_{\rm II}(f,g)\Big) =0.
  \end{eqnarray*}
  Thus the cochain complex $(C^\bullet_{\rm LieY}(\g,V)=\bigoplus\limits_{p=1}^\infty C^p_{\rm LieY}(\g,V),\delta)$ is well defined. For convenience, we call this cohomology the {\bf Yamaguti cohomology} in this paper.
  \end{pro}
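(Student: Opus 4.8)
The plan is to prove $\delta\circ\delta=0$ on the whole graded space $C^\bullet_{\rm LieY}(\g,V)$ by a direct substitution-and-cancellation argument, using nothing beyond the Lie-Yamaguti axioms \eqref{LY1}--\eqref{fundamental}, the representation axioms \eqref{RLYb}--\eqref{RLY5}, the definition \eqref{rep} of $D$, and the three auxiliary identities recorded in the Lemma preceding this proposition. Since a $1$-cochain is a single map $f\in\Hom(\g,V)$ while a $p$-cochain with $p\geqslant2$ is a pair $(f,g)$, I would treat the base case $n=0$ first and then the uniform case $p\geqslant2$.

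For the base case one checks both $\delta_{\rm I}\big(\delta_{\rm I}(f),\delta_{\rm II}(f)\big)=0$ and $\delta_{\rm II}\big(\delta_{\rm I}(f),\delta_{\rm II}(f)\big)=0$. Put $(F,G):=(\delta_{\rm I}(f),\delta_{\rm II}(f))$ with $F,G$ as in \eqref{cohomology3}--\eqref{cohomology4}, expand $\delta_{\rm I}(F,G)$ and $\delta_{\rm II}(F,G)$ through \eqref{cohomology1}--\eqref{cohomology2} with $n=1$, and substitute \eqref{cohomology3}--\eqref{cohomology4} a second time. Each summand is then either \emph{operator-valued} --- a composite of two of $\rho,\mu,D$ applied to one of the inputs --- or \emph{bracket-valued} --- $f$ evaluated on an iterated (binary/ternary) bracket. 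The operator-valued terms cancel in pairs by the representation axioms: for instance the terms $\rho(\cdot)D(\cdot,\cdot)f(\cdot)$ and $D(\cdot,\cdot)\rho(\cdot)f(\cdot)$ combine with a $\rho(\Courant{\cdot,\cdot,\cdot})f(\cdot)$ term via \eqref{RLYe} rewritten through \eqref{rep}, while the $\mu$--$\rho$, $\mu$--$\mu$ and $\mu$--$D$ composites are disposed of by \eqref{RLYb}, \eqref{RLYd}, \eqref{RYT4}, \eqref{RLY5} and the last Lemma identity. The bracket-valued terms cancel by the Lie-Yamaguti axioms: the slot-merging terms (those built from $\frkX_k\circ\frkX_l$) and the terms with a ternary bracket in the trailing slot by the fundamental identity \eqref{fundamental}, and the remaining ones by \eqref{LY1}, \eqref{LY2}, \eqref{LY3}. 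This settles the first assertion, which is the part the rest of the paper actually relies on.

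The case $p\geqslant2$ follows the same blueprint with more indices. For $(f,g)\in C^{n+1}_{\rm LieY}(\g,V)$, $n\geqslant1$, one substitutes \eqref{cohomology1}--\eqref{cohomology2} into themselves and sorts the resulting terms into four families: (a) composites of two of $\rho,\mu$ on the trailing arguments, cancelled by \eqref{RLYb}, \eqref{RLYd}, \eqref{RYT4}, \eqref{RLY5} and the last Lemma identity; (b) composites of $D$ with one of $\rho,\mu$, cancelled by \eqref{RLYe} and \eqref{RLY5}; (c) the $D(\frkX_k)D(\frkX_l)$ terms together with the terms in which a $D(\frkX_i)$ acts after a merge $\frkX_k\circ\frkX_l$ and the ``pure $\circ$'' terms where two slots are merged along the two possible routes --- this is the Jacobi-type cancellation, and it works because \eqref{fundamental} says exactly that each operator $\Courant{x,y,\cdot}$ is a derivation of the ternary bracket, so that $D$ and $\circ$ interact like a Lie algebra acting on itself (here \eqref{LY2} and the first two Lemma identities also enter); and (d), for $\delta_{\rm II}$ only, the terms in which the trailing argument $z$ has been replaced by $\Courant{x_k,y_k,z}$, which are absorbed using \eqref{LY1}, \eqref{LY3} and \eqref{fundamental}. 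Collecting (a)--(d) yields $\delta\circ\delta=0$ in every degree, so the cochain complex is well defined and the Yamaguti cohomology makes sense.

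I expect the only real obstacle to be organizational: controlling the signs $(-1)^{k+l}$, keeping straight which slot has been omitted and which merged, and --- the delicate point --- matching the family (c) and (d) terms in which a single ternary bracket is produced by merging two $\wedge^2\g$-slots along different routes, where one must apply \eqref{fundamental} in precisely the right arrangement. A conceptually cleaner but not shorter alternative would be to pass to the standard enveloping Lie algebra of $\g$ --- the span of the inner derivations $\frkL_{x,y}$ together with $\g$ --- with its induced representation on $V$, and to identify the Yamaguti complex with a subquotient of the Chevalley--Eilenberg complex of that Lie algebra, so that $\delta\circ\delta=0$ descends from $\delta_{\rm CE}^2=0$; but verifying this identification, in particular in degrees $0$ and $1$ where the cochain groups do not coincide on the nose, costs roughly as much as the direct check, so I would carry out the direct argument.
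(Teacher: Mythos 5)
The paper does not prove this proposition at all: it is stated with the citation {\rm \cite{Yamaguti2}} and the verification is deferred entirely to Yamaguti's original article, so any proof you give is by construction ``a different route'' from the paper's. That said, your direct substitution-and-cancellation plan is exactly the argument Yamaguti's reference carries out, and your organization of it is correct: in the base case the operator-valued composites are killed by \eqref{RLYb}, \eqref{RLYd}, \eqref{RLYe}, \eqref{RYT4}, \eqref{RLY5} together with \eqref{rep}, and the bracket-valued terms by \eqref{LY1}--\eqref{fundamental}; in degrees $p\geqslant 2$ the decisive points are indeed that \eqref{fundamental} makes each $\Courant{x_k,y_k,\cdot}$ a derivation of both brackets (so that the $\frkX_k\circ\frkX_l$ merges interact with the $D(\frkX_k)$ terms as in a Chevalley--Eilenberg complex) and that the identity $D(\Courant{x,y,z},w)+D(z,\Courant{x,y,w})=[D(x,y),D(z,w)]$ from the preceding Lemma handles the $D$--$D$ commutators. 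The one caveat is that your text is a plan rather than a completed computation --- none of the sign bookkeeping or the actual pairing of terms in your families (c) and (d) is exhibited --- so as written it would not substitute for the citation; but there is no conceptual gap, and the enveloping-Lie-algebra alternative you mention is a legitimate (if no cheaper) second route. Given that the paper relies on this statement only as imported background, matching the cited proof in outline is all that can reasonably be asked.
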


\begin{defi}
With the above notations, let $(f,g)$ in $C^p_{\rm LieY}(\g,V))$ (resp. $f\in C^1_{\rm LieY}(\g,V)$ for $p=1$) be a $p$-cochain. If it satisfies $\delta(f,g)=0$ (resp. $\delta(f)=0$), then it is called a $p$-cocycle. If there exists $(h,s)\in C^{p-1}_{\rm LieY}(\g,V)$,~(resp. $t\in C^1(\g,V)$, if $p=2$) such that $(f,g)=\delta(h,s)$~(resp. $(f,g)=\delta(t)$), then it is called a $p$-coboundary ($p\geqslant 2$). The set of $p$-cocycles and that of $p$-coboundaries are denoted by $Z^p_{\rm LieY}(\g,V)$ and $B^p_{\rm LieY}(\g,V)$ respectively. The resulting $p$-cohomology group is defined to be the factor space
$$H^p_{\rm LieY}(\g,V)=Z^p_{\rm LieY}(\g,V)/B^p_{\rm LieY}(\g,V).$$ In particular, we have
$$H^1_{\rm LieY}(\g,V)=\{f\in C^1_{\rm LieY}(\g,V):\delta (f)=0\}.$$
\end{defi}

\section{Crossed homomorphisms between Lie-Yamaguti algebras}
In this section, we introduce the notion of crossed homomorphisms between \LYA s, and show that a crossed homomorphism can be seen as a homomorphism of \LYA s. Moreover, a crossed homomorphism corresponds to a relative Rota-Baxter operator of weight 1.
Before this, we introduce the notion of center of \LYA s.

Let $(\g,[\cdot,\cdot]_\g,\Courant{\cdot,\cdot}_\g)$ be a Lie-Yamaguti algebra. Denote the {\bf center} of $\g$ by
$$C(\g):=\{x\in \g|[x,y]=0,\forall y\in \g\}\cap\Big(\{x\in \g|\Courant{x,y,z}=0,\forall y,z \in \g\}\cup\{x\in \g|\Courant{y,x,z}=0,\forall y,z \in \g\}\Big).$$

\begin{defi}
Let $(\g,[\cdot,\cdot]_\g,\Courant{\cdot,\cdot}_\g)$ and $(\h,[\cdot,\cdot]_\h,\Courant{\cdot,\cdot}_\h)$ be two Lie-Yamaguti algebras. Let $(\h;\rho,\mu)$ be a representation of $\g$ on the vector space $\h$, i.e., linear maps $\rho:\g\to\gl(\h)$, $\mu:\otimes^2\g\to\gl(\h)$, and $D:\wedge^2\g\to\gl(\g)$ are given by Eqs. \eqref{RLYb}-\eqref{rep}. If for all $x,y\in \g,~u,v,w\in \h$, the following conditions are satisfied
\begin{eqnarray*}
\rho(x)u,\mu(x,y)u\in C(\h),\\
\rho(x)[u,v]_\h=\mu(x,y)[u,v]_\h=0,\\
\rho(x)\Courant{u,v,w}_\h=\mu(x,y)\Courant{u,v,w}_\h=0.
\end{eqnarray*}
then we say that $(\rho,\mu)$ is an {\bf action} of $\g$ on $\h$.
\end{defi}

Let $(\rho,\mu)$ be an action of $\g$ on $\h$. By \eqref{rep}, we deduce that
$$D(x,y)u\in C(\g),~D(x,y)[u,v]_\h=D(x,y)\Courant{u,v,w}_\h=0,\quad\forall x,y\in \g,u,v,w\in \h.$$

The following proposition shows that an action of \LYA s can be used to characterize semidirect product \LYA s.
\begin{pro}
Let $(\g,[\cdot,\cdot]_\g,\Courant{\cdot,\cdot}_\g)$ and $(\h,[\cdot,\cdot]_\h,\Courant{\cdot,\cdot}_\h)$ be two Lie-Yamaguti algebras. Let $(\rho,\mu)$ be an action of $\g$ on $\h$, then there is a Lie-Yamaguti algebra structure on the direct sum $\g\oplus\h$ defined by
\begin{eqnarray*}
[x+u,y+v]_{\rho,\mu}&=&[x,y]_\g+\rho(x)v-\rho(y)u+[u,v]_\h,\\
\Courant{x+u,y+v,z+w}_{\rho,\mu}&=&\Courant{x,y,z}_\g+D(x,y)w+\mu(y,z)u-\mu(x,z)v+\Courant{u,v,w}_\h,
\end{eqnarray*}
for all $x,y,z\in \g$ and $u,v,w\in \h$. This Lie-Yamaguti algebra is called the {\bf semidirect product Lie-Yamaguti algebra} with respect to the action $(\rho,\mu)$, and is denoted by $\g\ltimes_{\rho,\mu}\h$.
\end{pro}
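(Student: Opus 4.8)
The plan is to verify directly that the pair $([\cdot,\cdot]_{\rho,\mu},\Courant{\cdot,\cdot,\cdot}_{\rho,\mu})$ on $\g\oplus\h$ satisfies the four defining axioms \eqref{LY1}--\eqref{fundamental} of a \LYA. Since the two brackets restrict to the given \LYA\ structures on $\g$ and on $\h$, and the cross terms are built from $\rho$, $\mu$, and $D$, the verification splits according to how many of the input arguments lie in $\g$ versus in $\h$. The pure-$\g$ components reduce to the \LYA\ axioms for $\g$; the pure-$\h$ components reduce to the \LYA\ axioms for $\h$ together with the action conditions $\rho(x)[u,v]_\h=0$, $\rho(x)\Courant{u,v,w}_\h=0$, etc., which force every term that would mix an $\h$-bracket with a $\rho$ or $\mu$ to vanish. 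The genuinely new content is in the mixed components, and there the identities to be used are precisely the representation axioms \eqref{RLYb}--\eqref{RLY5} for $(\h;\rho,\mu)$, the derived identities in the Lemma following Definition~\ref{defi:representation}, and the fact that $\rho(x)u,\mu(x,y)u\in C(\h)$ (so that such elements bracket trivially with everything in $\h$).

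Concretely, I would proceed axiom by axiom. For \eqref{LY1}: expand $[[x+u,y+v]_{\rho,\mu},z+w]_{\rho,\mu}$ and its cyclic permutations, collect the $\g$-valued part (which gives \eqref{LY1} for $\g$), collect the $\h$-valued part, and observe that terms like $\rho([x,y]_\g)w$, $[\rho(x)v,\cdot]_\h$, $\Courant{\rho(x)v,\cdot,\cdot}_\h$ either cancel in pairs using the definition of $D$ and the cyclic sums, or vanish because $\rho(x)v\in C(\h)$ and $\rho(x)[u,v]_\h=0$. For \eqref{LY2} and \eqref{LY3}, the mixed parts are handled by \eqref{RLYb}, \eqref{RLYd}, and the first derived identity $D([x,y],z)+D([y,z],x)+D([z,x],y)=0$; for instance the coefficient of $u$ in $\Courant{[x+u,y+v]_{\rho,\mu},z+w,t+s}_{\rho,\mu}$ plus permutations is exactly the left-hand side of \eqref{RLYb} applied to $u$. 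The fundamental identity \eqref{fundamental} is the longest: its mixed components on the $w$-slot use \eqref{RLY5a} (equivalently \eqref{RLYe} after taking components), those on the first two slots use \eqref{RYT4} and \eqref{RLY6}, and the pure-$\h$ part is \eqref{fundamental} for $\h$ combined with the vanishing $D(x,y)\Courant{u,v,w}_\h=0$ and $\mu(x,y)\Courant{u,v,w}_\h=0$.

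The main obstacle is bookkeeping rather than conceptual: the fundamental identity \eqref{fundamental}, once expanded with both brackets on $\g\oplus\h$, produces a large number of terms indexed by which of the five arguments is taken from $\h$, and one must match each homogeneous component against the correct representation axiom, being careful that $D$ is defined via \eqref{rep} (not assumed skew a priori, though it is). A clean way to manage this is to first record, as a preliminary step, that the projection $\pr_\g\colon\g\oplus\h\to\g$ is a \LYA\ homomorphism and that $\h$ sits inside as an abelian-type ideal in the sense that all brackets with at least one $\h$-entry and at least one $\g$-entry land in $\h$ and are linear in the $\h$-entries via $\rho$, $\mu$, $D$; this reduces checking \eqref{fundamental} to the cases with exactly one or exactly two $\h$-arguments (three or more $\h$-arguments immediately reduce to the $\h$-axioms and the action conditions). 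I expect the one-$\h$-argument cases to follow from \eqref{RLYe}, \eqref{RYT4}, \eqref{RLY5}, and the two-$\h$-argument cases to collapse using that $\rho(x)u,\mu(x,y)u,D(x,y)u\in C(\h)$. I would omit the full term-by-term computation, noting only that it is a routine (if lengthy) check, as is standard for semidirect product constructions, and refer to \cite{Zhang1} for the analogous statement with $\h$ replaced by a mere representation space.
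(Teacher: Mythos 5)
Your proposal is correct and takes the same route as the paper: the paper's proof of this proposition consists entirely of the sentence ``It is a direct computation, and we omit the details.'' Your outline is a faithful (and more informative) roadmap of that direct verification, correctly matching each homogeneous component of the four axioms to the corresponding representation axiom or action condition.
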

\begin{proof}
It is a direct computation, and we omit the details.
\end{proof}

The following definition is standard.

\begin{defi}\cite{Sheng Zhao,Takahashi}\label{homomorphism}
Suppose that $(\g,[\cdot,\cdot]_{\g},\Courant{\cdot,\cdot,\cdot}_{\g})$ and $(\h,[\cdot,\cdot]_{\h},\Courant{\cdot,\cdot,\cdot}_{\h})$ are two Lie-Yamaguti algebras. A {\bf homomorphism} from $(\g,[\cdot,\cdot]_{\g},\Courant{\cdot,\cdot,\cdot}_{\g})$ to $(\h,[\cdot,\cdot]_{\h},\Courant{\cdot,\cdot,\cdot}_{\h})$ is a linear map $\phi:\g \to \h$ that preserves the \LYA ~structures, that is, for all $x,y,z \in \g$,
\begin{eqnarray*}
\phi([x,y]_{\g})&=&[\phi(x),\phi(y)]_{\h},\\
~ \phi(\Courant{x,y,z}_{\g})&=&\Courant{\phi(x),\phi(y),\phi(z)}_{\h}.
\end{eqnarray*}
If, moreover, $\phi$ is a bijection, it is then called an {\bf isomorphism}.
\end{defi}

Now we are ready to introduce the notion of crossed homomorphisms between \LYA s.

\begin{defi}
Let $(\g,[\cdot,\cdot]_\g,\Courant{\cdot,\cdot}_\g)$ and $(\h,[\cdot,\cdot]_\h,\Courant{\cdot,\cdot}_\h)$ be two Lie-Yamaguti algebras. Let $(\rho,\mu)$ be an action of $\g$ on $\h$. A linear map $H:\g\longrightarrow\h$ is called a {\bf crossed homomorphism} from $\g$ to $\h$ with respect to $(\rho,\mu)$, if
\begin{eqnarray}
H[x,y]_\g&=&\rho(x)H(y)-\rho(y)H(x)+[Hx,Hy]_\h,\label{chomo1}\\
\ \ H\Courant{x,y,z}_\g&=&D(x,y)H(z)+\mu(y,z)H(x)-\mu(x,z)H(y)+\Courant{Hx,Hy,Hz}_\h,\quad\forall x,y,z\in \g.\label{chomo2}
\end{eqnarray}
\end{defi}

\begin{rmk}
If the action of $\g$ on $\h$ is trivial, then any crossed homomorphism from $\g$ to $\h$ is a Lie-Yamaguti algebra homomorphism as in Definition \ref{homomorphism}; if $\h$ is commutative, then any crossed homomorphism is a derivation from $\g$ to $\h$ with respect to the representation $(\h;\rho,\mu)$.
\end{rmk}

\begin{ex}
Let $(\g,[\cdot,\cdot],\Courant{\cdot,\cdot,\cdot})$ be a $4$-dimensional \LYA, and $\{e_1,e_2,e_3,e_4\}$ a basis. The nonzero brackets are given by
$$[e_1,e_2]=2e_4,\quad\quad \Courant{e_1,e_2,e_1}=e_4.$$
It is obvious that the center of $\g$ is spanned by $\{e_3,e_4\}$, and that the adjoint representation $(\g;\ad,\frkR)$ is an action of $\g$ on itself. Then
\begin{eqnarray*}
H=
\begin{pmatrix}
O & A\\
B & C
\end{pmatrix}
\end{eqnarray*}
is a crossed homomorphism from $\g$ to $\g$, where $A$ and $C$ have the form
\begin{eqnarray*}
\begin{pmatrix}
\lambda_1 & 0\\
\lambda_2 & 0
\end{pmatrix}
.
\end{eqnarray*}
\end{ex}

The following theorem shows that a crossed homomorphism can be seen as a homomorphism between \LYA s.

\begin{thm}\label{thm1}
Let $(\rho,\mu)$ be an action of a Lie-Yamaguti algebra $(\g,[\cdot,\cdot]_\g,\Courant{\cdot,\cdot,\cdot}_\g)$ on another Lie-Yamaguti algebra $(\h,[\cdot,\cdot]_\h,\Courant{\cdot,\cdot,\cdot}_\h)$. Then a linear map $H:\g\longrightarrow\h$ is a crossed homomorphism from $\g$ to $\h$ if and only if the linear map $\phi_H:\g\longrightarrow\g\ltimes_{\rho,\mu}\h$ is a Lie-Yamaguti algebra homomorphism, where
$$\phi_H(x):=(x,Hx),\quad\forall x\in \g.$$
\end{thm}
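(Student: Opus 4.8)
The plan is to unwind both definitions and check that the homomorphism conditions for $\phi_H$ are precisely the crossed homomorphism conditions \eqref{chomo1}--\eqref{chomo2} together with the fact that $H$ is linear. Since $\phi_H(x)=(x,Hx)$ is manifestly linear (because $H$ is), the only content is that $\phi_H$ preserves the binary bracket $[\cdot,\cdot]_{\rho,\mu}$ and the ternary bracket $\Courant{\cdot,\cdot,\cdot}_{\rho,\mu}$ of the semidirect product $\g\ltimes_{\rho,\mu}\h$.

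First I would compute $[\phi_H(x),\phi_H(y)]_{\rho,\mu}$ directly from the semidirect product formula with the first slot $x,y\in\g$ and the second slot $Hx,Hy\in\h$:
\begin{eqnarray*}
[\phi_H(x),\phi_H(y)]_{\rho,\mu}=[(x,Hx),(y,Hy)]_{\rho,\mu}=\big([x,y]_\g,\ \rho(x)Hy-\rho(y)Hx+[Hx,Hy]_\h\big).
\end{eqnarray*}
On the other hand $\phi_H([x,y]_\g)=([x,y]_\g, H[x,y]_\g)$. Comparing first components gives an identity; comparing second components, these agree for all $x,y$ if and only if \eqref{chomo1} holds. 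Next I would do the same for the ternary bracket: from the semidirect product formula,
\begin{eqnarray*}
\Courant{\phi_H(x),\phi_H(y),\phi_H(z)}_{\rho,\mu}=\big(\Courant{x,y,z}_\g,\ D(x,y)Hz+\mu(y,z)Hx-\mu(x,z)Hy+\Courant{Hx,Hy,Hz}_\h\big),
\end{eqnarray*}
while $\phi_H(\Courant{x,y,z}_\g)=(\Courant{x,y,z}_\g, H\Courant{x,y,z}_\g)$; equality of second components for all $x,y,z$ is exactly \eqref{chomo2}. This establishes both directions simultaneously: if $H$ is a crossed homomorphism then $\phi_H$ is a \LYA ~homomorphism, and conversely if $\phi_H$ is a homomorphism then reading off the $\h$-components of the two bracket identities recovers \eqref{chomo1} and \eqref{chomo2}.

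There is essentially no hard step here; the proof is a one-to-one matching of components. The only point requiring a moment of care is bookkeeping: one must make sure the semidirect product brackets are applied with the correct argument order (the asymmetry between $\mu(y,z)u$ and $\mu(x,z)v$ in the formula for $\Courant{\cdot,\cdot,\cdot}_{\rho,\mu}$ must line up with the $\mu(y,z)Hx-\mu(x,z)Hy$ pattern in \eqref{chomo2}), and that the $\g$-components automatically agree because $\g$ is a \LYA ~subalgebra of $\g\ltimes_{\rho,\mu}\h$. I would also remark that since $\phi_H$ is automatically injective (its first component is the identity on $\g$), a crossed homomorphism $H$ in fact yields an embedding of $\g$ into the semidirect product, mirroring the classical graph-of-a-map picture.
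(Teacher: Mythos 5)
Your proposal is correct and matches the paper's own proof essentially verbatim: both compute the semidirect product brackets of $\phi_H(x),\phi_H(y),\phi_H(z)$, observe the $\g$-components agree automatically, and identify equality of the $\h$-components with conditions \eqref{chomo1} and \eqref{chomo2}. No further comment is needed.
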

\begin{proof}
For all $x,y,z\in \g$, we have
\begin{eqnarray*}
\phi_H\Big([x,y]_\g\Big)&=&\Big([x,y]_\g,H[x,y]_\g\Big),\\
~[\phi_H(x),\phi_H(y)]_{\rho,\mu}&=&[(x,Hx),(y,Hy)]_{\rho,\mu}=\Big([x,y]_\g,\rho(x)Hy-\rho(y)Hx+[Hx,Hy]_\h\Big).
\end{eqnarray*}

Similarly, we also have
\begin{eqnarray*}
\phi_H\Big(\Courant{x,y,z}]_\g\Big)&=&\Big(\Courant{x,y,z}_\g,H\Courant{x,y,z}_\g\Big),\\
~\Courant{\phi_H(x),\phi_H(y),\phi_H(z)}_{\rho,\mu}&=&\Big(\Courant{x,y,z}_\g,D(x,y)H(z)+\mu(y,z)H(x)-\mu(x,z)H(y)+\Courant{Hx,Hy,Hz}_\h\Big).
\end{eqnarray*}
Thus, we have that the linear map $\phi_H:\g\longrightarrow\g\ltimes_{\rho,\mu}\h$ is a Lie-Yamaguti algebra homomorphism if and only of the following two equalities hold:
\begin{eqnarray*}
H[x,y]_\g&=&\rho(x)Hy-\rho(y)Hx+[Hx,Hy]_\h,\label{cross1}\\
\ \ \ ~H\Courant{x,y,z}_\g&=&D(x,y)H(z)+\mu(y,z)H(x)-\mu(x,z)H(y)+\Courant{Hx,Hy,Hz}_\h,\quad\forall x,y,z\in \g,\label{cross2}
\end{eqnarray*}
which implies that the linear map $H:\g\longrightarrow\h$ is a crossed homomorphism from $\g$ to $\h$. This completes the proof.
\end{proof}

\begin{rmk}
In fact, a crossed homomorphism corresponds to a split nonabelian extension of Lie-Yamaguti algebras. More precisely, consider the following nonabelian extension of Lie-Yamaguti algebras:
\[
\xymatrix{
0 \ar[r] & \h \ar[r] & \g\oplus\h \ar[r] & \g \ar[r] & 0.}
\]
A section $s:\g\longrightarrow\g\oplus\h$ is given by $s(x)=(x,Hx), ~x\in \g$. Theorem \ref{thm1} says that $s$ is a Lie-Yamaguti algebra homomorphism if and only if $H$ is a crossed homomorphism. Such an extension is called a split nonabelian extension. See \cite{Zhang1} for more details about extension of \LYA s.
\end{rmk}

Then we introduce the notion of homomorphisms of crossed homomorphisms.

\begin{defi}
Let $H$ and $H'$ be two crossed homomorphisms from a Lie-Yamaguti algebra $(\g,[\cdot,\cdot]_\g,\Courant{\cdot,\cdot,\cdot}_\g)$ to another Lie-Yamaguti algebra $(\h,[\cdot,\cdot]_\h,\Courant{\cdot,\cdot,\cdot}_\h)$ with respect to an action $(\rho,\mu)$. A {\bf homomorphism} from $H'$ to $H$ is a pair $(\psi_\g,\psi_\h)$, where $\psi_\g:\g\longrightarrow\g$ and $\psi_\h:\h\longrightarrow\h$ are two Lie-Yamaguti algebra homomorphisms such that
\begin{eqnarray}
\psi_\h\circ H'&=&H\circ \psi_\g,\label{crho1}\\
\psi_\h\Big(\rho(x)u\Big)&=&\rho\Big(\psi_\g(x)\Big)\psi_\h(u),\label{crosshomo1}\\
\psi_\h\Big(\mu(x,y)u\Big)&=&\mu\Big(\psi_\g(x),\psi_\g(y)\Big)\psi_\h(u),\quad\forall x,y\in \g, u\in \h.\label{crosshomo2}
\end{eqnarray}
In particular, if both $\psi_\g$ and $\psi_\h$ are invertible, then $(\psi_\g,\psi_h)$ is called an {\bf isomorphism} from $H'$ to $H$.
\end{defi}

By Eqs. \eqref{crosshomo1} and \eqref{crosshomo2}, and a direct computation, we have the following proposition.

\begin{pro}
Let $H$ and $H'$ be two crossed homomorphisms from a Lie-Yamaguti algebra $(\g,[\cdot,\cdot]_\g,\Courant{\cdot,\cdot,\cdot}_\g)$ to another Lie-Yamaguti algebra $(\h,[\cdot,\cdot]_\h,\Courant{\cdot,\cdot,\cdot}_\h)$ with respect to an action $(\rho,\mu)$. Suppose that $(\psi_\g,\psi_\h)$ is a homomorphism from $H'$ to $H$, then we have
\begin{eqnarray}
\psi_\h\Big(D(x,y)u\Big)&=&D\Big(\psi_\g(x),\psi_\g(y)\Big)\psi_\h(u),\quad\forall x,y\in \g, u\in \h.\label{crosshomo3}
\end{eqnarray}
\end{pro}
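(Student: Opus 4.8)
The plan is to derive \eqref{crosshomo3} purely formally from the definition of $D$ in \eqref{rep} together with the two compatibility identities \eqref{crosshomo1} and \eqref{crosshomo2} and the fact that $\psi_\g$ is a Lie-Yamaguti algebra homomorphism. Recall that
\[
D(x,y)u=\mu(y,x)u-\mu(x,y)u+\rho(x)\rho(y)u-\rho(y)\rho(x)u-\rho([x,y]_\g)u,
\]
so it suffices to push $\psi_\h$ through each of the five terms on the right-hand side and recognize the result as $D(\psi_\g(x),\psi_\g(y))\psi_\h(u)$.

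First I would apply $\psi_\h$ to $D(x,y)u$ and use linearity of $\psi_\h$ to split it into the five summands. The terms $\psi_\h(\mu(y,x)u)$ and $\psi_\h(\mu(x,y)u)$ are handled immediately by \eqref{crosshomo2}, giving $\mu(\psi_\g(y),\psi_\g(x))\psi_\h(u)$ and $\mu(\psi_\g(x),\psi_\g(y))\psi_\h(u)$. For the term $\psi_\h(\rho(x)\rho(y)u)$ I would apply \eqref{crosshomo1} twice: first $\psi_\h(\rho(x)v)=\rho(\psi_\g(x))\psi_\h(v)$ with $v=\rho(y)u$, then again to $\psi_\h(\rho(y)u)$, yielding $\rho(\psi_\g(x))\rho(\psi_\g(y))\psi_\h(u)$; similarly for $\psi_\h(\rho(y)\rho(x)u)$. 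Finally, for $\psi_\h(\rho([x,y]_\g)u)$ I would use \eqref{crosshomo1} once to move $\psi_\h$ inside as $\rho(\psi_\g([x,y]_\g))\psi_\h(u)$, and then invoke that $\psi_\g$ preserves the binary bracket, i.e.\ $\psi_\g([x,y]_\g)=[\psi_\g(x),\psi_\g(y)]_\g$, to rewrite this as $\rho([\psi_\g(x),\psi_\g(y)]_\g)\psi_\h(u)$. Collecting the five rewritten terms and comparing with \eqref{rep} applied to the pair $(\psi_\g(x),\psi_\g(y))$ acting on $\psi_\h(u)$ gives exactly $D(\psi_\g(x),\psi_\g(y))\psi_\h(u)$, which is \eqref{crosshomo3}.

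The computation is entirely routine; the only mild point of care is the bookkeeping in the two $\rho\circ\rho$ terms, where \eqref{crosshomo1} must be applied in two nested steps rather than one, and the single use of the homomorphism property of $\psi_\g$ on the binary bracket. No use of \eqref{crho1}, of the ternary bracket, or of the representation axioms \eqref{RLYb}--\eqref{RLY5} is needed, so there is no real obstacle — the statement is a direct consequence of the defining equation \eqref{rep} and the intertwining relations \eqref{crosshomo1}--\eqref{crosshomo2}. I would therefore present it as a short displayed chain of equalities and omit no steps, since each is a single substitution.
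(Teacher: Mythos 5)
Your proposal is correct and is precisely the ``direct computation'' the paper invokes: expand $D(x,y)u$ via \eqref{rep}, push $\psi_\h$ through the two $\mu$-terms with \eqref{crosshomo2}, through the commutator terms by applying \eqref{crosshomo1} twice, and through $\rho([x,y]_\g)u$ by \eqref{crosshomo1} together with $\psi_\g$ preserving the binary bracket. Nothing is missing.
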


At the end of this section, we reveal the relationship between crossed homomorphisms between \LYA s and relative Rota-Baxter operators of weight 1 on \LYA s. We give the notion of relative Rota-Baxter operators of weight $\lambda$ on \LYA s first.

\begin{defi}
Let $(\g,[\cdot,\cdot],\Courant{\cdot,\cdot,\cdot})$ be a \LYA ~and $(V;\rho,\mu)$ its representation. A linear map $T:V\longrightarrow\g$ is called {\bf a relative Rota-Baxter operator of weight $\lambda$} if the following equalities hold
\begin{eqnarray*}
[Tu,Tv]&=&T\Big(\rho(Tu)v-\rho(Tv)u+\lambda[u,v]\Big),\\
\Courant{Tu,Tv,Tw}&=&T\Big(D(Tu,Tv)w+\mu(Tv,Tw)u-\mu(Tu,Tw)v+\lambda\Courant{u,v,w}\Big),\quad\forall u,v,w\in V.
\end{eqnarray*}
\end{defi}

Relative Rota-Baxter operators of nonzero weight on Lie algebras stem from the classical Yang-Baxter equation and have many applications on mathematical physics. Here, we introduce the notion of relative Rota-Baxter  operators of nonzero weight on \LYA s and explore its relation with crossed homomorphisms.

\begin{pro}
Let $(\rho,\mu)$ be an action of a Lie-Yamaguti algebra $(\g,[\cdot,\cdot]_\g,\Courant{\cdot,\cdot,\cdot}_\g)$ on another Lie-Yamaguti algebra $(\h,[\cdot,\cdot]_\h,\Courant{\cdot,\cdot,\cdot}_\h)$. An invertible linear map $H:\g\longrightarrow\h$ is a crossed homomorphism from $\g$ to $\h$ with respect to $(\rho,\mu)$ if and only if $H^{-1}:\h\longrightarrow\g$ is a relative Rota-Baxter operator of weight $1$ on $\g$ with respect to the representation $(\h;\rho,\mu)$.
\end{pro}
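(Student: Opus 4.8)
The plan is to set $T:=H^{-1}:\h\longrightarrow\g$ and observe that the two defining identities \eqref{chomo1}--\eqref{chomo2} of a crossed homomorphism become, after applying $T$ to both sides and renaming variables, literally the two defining identities of a relative Rota-Baxter operator of weight $1$ for the representation $(\h;\rho,\mu)$. The only structural input one needs is that $H$ is invertible, so that $x\mapsto u:=Hx$ is a linear bijection $\g\to\h$ with $HTu=u$ and $THx=x$; every triple $u,v,w\in\h$ can thus be written uniquely as $Hx,Hy,Hz$ with $x=Tu,y=Tv,z=Tw$.

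For the forward implication I would take \eqref{chomo1}, apply $T=H^{-1}$ to both sides to get $[x,y]_\g=T\big(\rho(x)Hy-\rho(y)Hx+[Hx,Hy]_\h\big)$, and then substitute $x=Tu$, $y=Tv$ (so $Hx=u$, $Hy=v$); this yields exactly $[Tu,Tv]_\g=T\big(\rho(Tu)v-\rho(Tv)u+[u,v]_\h\big)$, the binary relative Rota-Baxter identity of weight $1$. Repeating the same manoeuvre with \eqref{chomo2} — apply $T$, then substitute $x=Tu,y=Tv,z=Tw$ — produces the ternary identity $\Courant{Tu,Tv,Tw}_\g=T\big(D(Tu,Tv)w+\mu(Tv,Tw)u-\mu(Tu,Tw)v+\Courant{u,v,w}_\h\big)$. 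For the converse I would simply run this backwards: given $x,y,z\in\g$, set $u=Hx,v=Hy,w=Hz$, read off the two relative Rota-Baxter identities for $T$, and apply $H$ to both sides to recover \eqref{chomo1} and \eqref{chomo2}.

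There is essentially no serious obstacle here; the statement is a change of variables. The only points worth a line of care are: first, that invertibility of $H$ is genuinely used, since without it the substitution $u\leftrightarrow x$ would not be a bijection and the equivalence would break down; and second, that the bilinear map $D$ appearing in the relative Rota-Baxter identities is the map $D_{\rho,\mu}$ of \eqref{rep} attached to the representation $(\h;\rho,\mu)$, which is precisely the $D$ occurring in \eqref{chomo2}, while the internal brackets $[u,v]_\h$ and $\Courant{u,v,w}_\h$ in the weight-$1$ relative Rota-Baxter identities are meaningful because here $V=\h$ carries its own Lie-Yamaguti structure with respect to which $(\h;\rho,\mu)$ is a representation. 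Once these bookkeeping remarks are recorded, both directions are immediate.
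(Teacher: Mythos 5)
Your proposal is correct and follows essentially the same route as the paper: both directions are the same change of variables $u=Hx$, $v=Hy$, $w=Hz$, with the forward implication obtained by writing $[H^{-1}u,H^{-1}v]_\g=H^{-1}\big(H[H^{-1}u,H^{-1}v]_\g\big)$ and expanding via \eqref{chomo1}--\eqref{chomo2}, and the converse by applying $H$ to the relative Rota-Baxter identities. Your added remarks on where invertibility is used and on the identification of $D$ with $D_{\rho,\mu}$ are accurate bookkeeping that the paper leaves implicit.
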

\begin{proof}
Assume that the invertible linear map $H:\g\longrightarrow\h$ is a crossed homomorphism, then for all $u,v,w\in \h$, by \eqref{chomo1}, we have
\begin{eqnarray*}
~ &&[H^{-1}(u),H^{-1}(v)]_\g=H^{-1}\Big(H[H^{-1}(u),H^{-1}(v)]_\g\Big)\\
~ &=&H^{-1}\Big(\rho\big(H^{-1}(u)\big)v-\rho\big(H^{-1}(v)\big)w+[u,v]_\h\Big).
\end{eqnarray*}
Similarly, by \eqref{chomo2}, we have
\begin{eqnarray*}
~ &&\Courant{H^{-1}(u),H^{-1}(v),H^{-1}(w)}_\g\\
~ &=&H^{-1}\Big(H\Courant{H^{-1}(u),H^{-1}(v),H^{-1}(w)}_\g\Big)\\
~ &=&H^{-1}\Big(D\big(H^{-1}(u),H^{-1}(v)\big)w+\mu\big(H^{-1}(v),H^{-1}(w)\big)u-\mu\big(H^{-1}(u),H^{-1}(w)\big)v+\Courant{u,v,w}_\h\Big).
\end{eqnarray*}
Thus $H^{-1}$ is a relative Rota-Baxter operator of weight $1$.

Conversely, let $H^{-1}$ is a relative Rota-Baxter operator of weight $1$. For all $x,y,z\in \g$, there exist $u,v,w\in \h$, such that $x=H^{-1}(u),y=H^{-1}(v)$, and $z=H^{-1}(w)$. Then we have
\begin{eqnarray*}
~ &&H[x,y]_\g=H[H^{-1}(u),H^{-1}(v)]_\g\\
~ &=&H\circ H^{-1}\Big(\rho\big(H^{-1}(u)\big)v-\rho\big(H^{-1}(v)\big)u+[u,v]_\h\Big)\\
~ &=&\rho(x)H(y)-\rho(y)H(x)+[Hx,Hy]_\h,
\end{eqnarray*}
and
\begin{eqnarray*}
~ &&H\Courant{x,y,z}_\g=H[H^{-1}(u),H^{-1}(v),H^{-1}(w)]_\g\\
~ &=&H\circ H^{-1}\Big(D\big(H^{-1}(u),H^{-1}(v)\big)w+\mu\big(H^{-1}(v),H^{-1}(w)\big)u-\mu\big(H^{-1}(u),H^{-1}(w)\big)v+\Courant{u,v,w}_\h\Big)\\
~ &=&D(x,y)H(z)+\mu(y,z)H(x)-\mu(x,z)H(y)+\Courant{Hx,Hy,Hz}_\h,
\end{eqnarray*}
which implies that $H$ is a crossed homomorphism.
\end{proof}

\section{Cohomology of crossed homomorphisms between Lie-Yamaguti algebras}

In this section, we build the cohomology of crossed homomorphisms between Lie-Yamaguti algebras. First of all, we construct a representation of a \LYA ~via a given action.

Let $(\g,[\cdot,\cdot]_\g,\Courant{\cdot,\cdot}_\g)$ and $(\h,[\cdot,\cdot]_\h,\Courant{\cdot,\cdot}_\h)$ be two Lie-Yamaguti algebras, and $H:\g\longrightarrow\h$ a crossed homomorphism with respect to an action $(\rho,\mu)$. Define $\rho_H:\g\longrightarrow\gl(\h),~\mu_H:\otimes^2\g\longrightarrow\gl(\h)$ to be
\begin{eqnarray}
\rho_H(x)u&:=&[Hx,u]_\h+\rho(x)u,\label{newrep1}\\
~\mu_H(x,y)u&:=&\Courant{u,Hx,Hy}_\h+\mu(x,y)u, \quad \forall x,y\in \g,~u\in \h.\label{newrep2}
\end{eqnarray}

\begin{lem}
With the assumptions above, define $D_H:\wedge^2\g\longrightarrow\gl(h)$ to be
\begin{eqnarray}
~D_H(x,y)u&:=&\Courant{Hx,Hy,u}_\h+D(x,y)u,\quad\forall x,y\in \g,~u\in \h.\label{newrep3}
\end{eqnarray}
Then $D_H=D_{\rho_H,\mu_H}$.
\end{lem}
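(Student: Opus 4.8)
The plan is a direct verification. I would start from the definition \eqref{rep}, writing
$D_{\rho_H,\mu_H}(x,y)u = \mu_H(y,x)u - \mu_H(x,y)u + \rho_H(x)\rho_H(y)u - \rho_H(y)\rho_H(x)u - \rho_H([x,y]_\g)u$,
then substitute the formulas \eqref{newrep1} and \eqref{newrep2}, and reduce everything, using the action axioms, the crossed homomorphism equation \eqref{chomo1}, and the Lie-Yamaguti identity \eqref{LY1} in $\h$, to the single expression $\Courant{Hx,Hy,u}_\h + D(x,y)u = D_H(x,y)u$.

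First I would simplify the part quadratic in $H$. Expanding $\rho_H(x)\rho_H(y)u = [Hx,[Hy,u]_\h]_\h + [Hx,\rho(y)u]_\h + \rho(x)[Hy,u]_\h + \rho(x)\rho(y)u$ and invoking the action conditions $\rho(y)u\in C(\h)$ and $\rho(x)[Hy,u]_\h=0$, the two middle terms drop out, so $\rho_H(x)\rho_H(y)u = [Hx,[Hy,u]_\h]_\h + \rho(x)\rho(y)u$, and symmetrically for $\rho_H(y)\rho_H(x)u$. For the last summand, \eqref{chomo1} gives $H[x,y]_\g = \rho(x)Hy - \rho(y)Hx + [Hx,Hy]_\h$; since $\rho(x)Hy,\rho(y)Hx\in C(\h)$ bracket trivially with $u$, we get $\rho_H([x,y]_\g)u = [[Hx,Hy]_\h,u]_\h + \rho([x,y]_\g)u$. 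Collecting the contributions valued in $\gl(\h)$ through $(\rho,\mu)$ alone reproduces exactly $D_{\rho,\mu}(x,y)u = D(x,y)u$ by \eqref{rep}; note that only \eqref{chomo1}, not the ternary equation \eqref{chomo2}, enters here.

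It remains to handle the purely $\h$-valued brackets. The leftover binary part is $[Hx,[Hy,u]_\h]_\h - [Hy,[Hx,u]_\h]_\h - [[Hx,Hy]_\h,u]_\h$, which, rewriting $[[Hy,u]_\h,Hx]_\h$ and $[[u,Hx]_\h,Hy]_\h$ via skew-symmetry and applying \eqref{LY1} to the triple $(Hx,Hy,u)$ in $\h$, equals $\Courant{Hx,Hy,u}_\h + \Courant{Hy,u,Hx}_\h + \Courant{u,Hx,Hy}_\h$. Adding the ternary terms coming from $\mu_H(y,x)u - \mu_H(x,y)u$, namely $\Courant{u,Hy,Hx}_\h - \Courant{u,Hx,Hy}_\h$, the two copies of $\Courant{u,Hx,Hy}_\h$ cancel, while skew-symmetry of $\Courant{\cdot,\cdot,\cdot}_\h$ in its first two arguments gives $\Courant{Hy,u,Hx}_\h = -\Courant{u,Hy,Hx}_\h$, cancelling the remaining two; only $\Courant{Hx,Hy,u}_\h$ survives. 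Hence $D_{\rho_H,\mu_H}(x,y)u = \Courant{Hx,Hy,u}_\h + D(x,y)u$, which is precisely $D_H(x,y)u$.

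The argument is entirely computational with no conceptual difficulty; the only real care needed is the sign bookkeeping when feeding $(Hx,Hy,u)$ into \eqref{LY1} and the discipline of invoking all the relevant action conditions to discard the cross terms. That sign and cancellation step is where a first attempt is most likely to go astray, so I would present it in full detail.
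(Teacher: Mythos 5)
Your proposal is correct and follows essentially the same route as the paper's proof: expand $D_{\rho_H,\mu_H}$ via \eqref{rep}, discard cross terms using the action conditions, replace $H[x,y]_\g$ via \eqref{chomo1}, collect the $(\rho,\mu)$-terms into $D(x,y)u$, and resolve the leftover brackets with \eqref{LY1}. Your explicit sign bookkeeping in the final cancellation is exactly the step the paper compresses into its citation of \eqref{LY1} and \eqref{rep}, and it checks out.
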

\begin{proof}
For all $x,y\in \g,~u\in \h$, we have
\begin{eqnarray*}
~ &&D_{\rho_H,\mu_H}(x,y)u\\
~ &=&\mu_H(y,x)u-\mu_H(x,y)u+[\rho_H(x),\rho_H(y)](u)-\rho_H([x,y]_\g)u\\
~ &=&\Courant{u,Hy,Hx}_\h+\mu(y,x)u-\Courant{u,Hx,Hy}_\h-\mu(x,y)u\\
~ &&+[Hx,[Hy,u]_\h]_\h+\rho(x)[Hy,u]_\h+[Hx,\rho(y)u]_\h+\rho(x)\rho(y)u\\
~ &&-[Hy,[Hx,u]_\h]_\h-\rho(y)[Hx,u]_\h-[Hy,\rho(x)u]_\h-\rho(y)\rho(x)u\\
~ &&-[H[x,y]_\g,u]_\h-\rho([x,y]_\g)u\\
~ &=&\Courant{u,Hy,Hx}_\h-\Courant{u,Hx,Hy}_\h+[Hx,[Hy,u]_\h]_\h\\
~ &&-[Hy,[Hx,u]_\h]_\h-[[Hx,Hy]_\h,u]_\h+D(x,y)u\\
~ &\stackrel{\eqref{LY1},\eqref{rep}}{=}&\Courant{Hx,Hy,u}_\h+D(x,y)u\\
~ &=&D_H(x,y)u.
\end{eqnarray*}
This completes the proof.
\end{proof}

\begin{pro}
With the assumptions above, then $(\h;\rho_H,\mu_H)$ is a representation of $\g$, where $\rho_H,~\mu_H$, and $D_H$ are given by \eqref{newrep1}-\eqref{newrep3} respectively.
\end{pro}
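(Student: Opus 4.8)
The plan is to deduce this from Theorem \ref{thm1} rather than to verify the five identities \eqref{RLYb}--\eqref{RLY5} by hand: I will realize $(\h;\rho_H,\mu_H)$ as the pullback along $\phi_H$ of the adjoint representation of the semidirect product $\widehat\g:=\g\ltimes_{\rho,\mu}\h$, after restricting that representation to $\h$. First I would record that $\h$, identified with $\{(0,u):u\in\h\}\subseteq\widehat\g$, is an ideal of $\widehat\g$: a one-line inspection of the semidirect-product brackets gives $[\widehat\g,\h]_{\rho,\mu}\subseteq\h$, $\Courant{\widehat\g,\widehat\g,\h}_{\rho,\mu}\subseteq\h$ and $\Courant{\h,\widehat\g,\widehat\g}_{\rho,\mu}\subseteq\h$ (the remaining middle-slot inclusion follows from skew-symmetry in the first two arguments). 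Hence the adjoint representation $(\widehat\g;\ad,\frkR)$ of Example \ref{ad}, applied to $\widehat\g$, restricts to a representation of $\widehat\g$ on the invariant subspace $\h$.

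Next I would invoke the general (and routine) fact that a representation of a \LYA ~pulls back along a homomorphism: if $\phi:\g\to\g'$ is a homomorphism of \LYA s and $(V;\rho',\mu')$ a representation of $\g'$, then $\big(V;\rho'\circ\phi,\,\mu'\circ(\phi\otimes\phi)\big)$ is a representation of $\g$, because each of \eqref{RLYb}--\eqref{RLY5} for the pullback is obtained from the corresponding identity for $(V;\rho',\mu')$ by substituting $\phi(x),\phi(y),\dots$ for the arguments and using that $\phi$ preserves $[\cdot,\cdot]$ and $\Courant{\cdot,\cdot,\cdot}$, together with the immediate consequence $D_{\rho'\circ\phi,\,\mu'\circ(\phi\otimes\phi)}(x,y)=D_{\rho',\mu'}(\phi x,\phi y)$. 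Applying this with $\phi=\phi_H$ (a homomorphism by Theorem \ref{thm1}) and the representation from the previous paragraph yields a representation of $\g$ on $\h$.

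Finally I would identify this pullback. Using $\phi_H(x)=(x,Hx)$ and the brackets of $\g\ltimes_{\rho,\mu}\h$, a short computation gives, for all $x,y\in\g$, $u\in\h$, the identities $\ad_{\phi_H(x)}(0,u)=\big(0,\,[Hx,u]_\h+\rho(x)u\big)$, $\frkR_{\phi_H(x),\phi_H(y)}(0,u)=\big(0,\,\Courant{u,Hx,Hy}_\h+\mu(x,y)u\big)$, and, since $\frkL_{a,b}c=\Courant{a,b,c}_{\rho,\mu}$, also $\frkL_{\phi_H(x),\phi_H(y)}(0,u)=\big(0,\,\Courant{Hx,Hy,u}_\h+D(x,y)u\big)$. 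Under $\h\cong\{0\}\oplus\h$ these are exactly $\rho_H(x)u$, $\mu_H(x,y)u$ and $D_H(x,y)u$ from \eqref{newrep1}--\eqref{newrep3} (so this also reproves the preceding Lemma conceptually), and since representation axioms are inherited under restriction to an invariant subspace and under pullback, $(\h;\rho_H,\mu_H)$ is a representation of $\g$. There is no genuine obstacle here: the only computations are the three-inclusion ideal check and the three bracket evaluations, both mechanical, the one place to be careful being the order of the arguments in $\frkR$ and $\frkL$. A reader wanting a self-contained argument can instead expand \eqref{RLYb}--\eqref{RLY5} directly for $(\h;\rho_H,\mu_H)$, the identities \eqref{RYT4} and \eqref{RLY5} being the most laborious since $\mu_H$ is quadratic, but this obscures the structure.
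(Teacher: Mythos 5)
Your argument is correct, but it is a genuinely different route from the paper's. The paper proves this proposition by brute force: it expands \eqref{RLYb} and \eqref{RLYe} for $(\rho_H,\mu_H)$ directly, cancels terms using the action axioms, reduces what remains to the Lie-Yamaguti identities \eqref{LY2} and \eqref{LY3} for $\h$ applied to $Hx,Hy,Hz,u$, and then declares the remaining three identities (including the quadratic ones \eqref{RYT4} and \eqref{RLY5}) to be ``obtained similarly.'' You instead factor the statement through three structural observations: $\h$ is an ideal of $\g\ltimes_{\rho,\mu}\h$ (your three inclusions check out, and the middle-slot one does follow from skew-symmetry in the first two arguments); the adjoint representation of the semidirect product restricts to an invariant subspace; and representations pull back along Lie-Yamaguti homomorphisms, here $\phi_H$ from Theorem \ref{thm1}. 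Your three bracket evaluations identifying the pullback with $\rho_H$, $\mu_H$, $D_H$ are correct (I verified $\frkR_{\phi_H(x),\phi_H(y)}(0,u)=(0,\mu(x,y)u+\Courant{u,Hx,Hy}_\h)$ and $\frkL_{\phi_H(x),\phi_H(y)}(0,u)=(0,D(x,y)u+\Courant{Hx,Hy,u}_\h)$, the extra $\mu(\cdot,0)$ and $D(0,\cdot)$ terms vanishing). What your approach buys is a uniform treatment of all five representation axioms at once --- including \eqref{RYT4} and \eqref{RLY5}, which the paper never writes out --- plus a conceptual re-derivation of the preceding lemma $D_H=D_{\rho_H,\mu_H}$. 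What it costs is two auxiliary general facts (restriction to ideals, pullback along homomorphisms) that the paper does not state, though both are routine and you sketch their proofs adequately; note also that you are leaning on the semidirect-product proposition, whose own verification the paper omits, so your argument is only as self-contained as that earlier claim.
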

\begin{proof}
For all $x,y,z\in \g,~u\in \h$, we have
\begin{eqnarray*}
~ &&\mu_H([x,y]_\g,z)u-\mu_H(x,z)\rho_H(y)u+\mu_H(y,z)\rho_H(x)u\\
~ &=&\Courant{u,H[x,y]_\g,Hz}_\h+\mu([x,y],z)u-\Courant{[Hy,u]_\h,Hx,Hz}_\h\\
~ &&-\mu(x,z)[Hy,u]_\h-\Courant{\rho(y)u,Hx,Hz}_\h-\mu(x,z)\rho(y)u\\
~ &&+\Courant{[Hx,u]_\h,Hy,Hz}_\h+\mu(y,z)[Hx,u]_\h+\Courant{\rho(x)u,Hy,Hz}_\h\\
~ &&+\mu(y,z)\rho(x)u\\
~ &\stackrel{\eqref{RLYd}}{=}&\Courant{u,[Hx,Hy]_\h,Hz}_\h-\Courant{[Hy,u]_\h,Hx,Hz}_\h+\Courant{[Hx,u]_\h,Hy,Hz}_\h\\
~ &\stackrel{\eqref{LY2}}{=}&0,
\end{eqnarray*}
and
\begin{eqnarray*}
~ &&\rho_H(\Courant{x,y,z}_\g)u-[D_H(x,y),\rho_H(z)]u\\
~ &=&[H\Courant{x,y,z}_\g,u]_\h+\rho(\Courant{x,y,z}_\g)u-\Courant{Hx,Hy,[Hz,u]_\h}_\h\\
~ &&-D(x,y)[Hz,u]_\h-\Courant{Hx,Hy,\rho(z)u}_\h-D(x,y)\rho(z)u\\
~ &&+[Hz,\Courant{Hx,Hy,u}_\h]_\h+\rho(z)\Courant{Hx,Hy,u}_\h+[Hz,D(x,y)u]_\h\\
~ &&+\rho(z)D(x,y)u\\
~ &\stackrel{\eqref{RLYe}}{=}&[\Courant{Hx,Hy,Hz}_\h,u]_\h-\Courant{Hx,Hy,[Hz,u]_\h}_\h+[Hz,\Courant{Hx,Hy,u}_\h]_\h\\
~ &\stackrel{\eqref{LY3}}{=}&0.
\end{eqnarray*}
Other equalities can be obtained similarly. We omit the details.
\end{proof}

Let $(\g,[\cdot,\cdot]_\g,\Courant{\cdot,\cdot,\cdot}_\g)$ and $(\h,[\cdot,\cdot]_\h,\Courant{\cdot,\cdot,\cdot}_\h)$ be two Lie-Yamaguti algebras. Let $(\rho,\mu)$ be an action of $\g$ on $\h$. Define $\delta:\wedge^2\g\longrightarrow\Hom(\g,\h)$ to be
\begin{eqnarray}
\Big(\delta(x,y)\Big)z:=\mu(y,z)(Hx)-\mu(x,z)(Hy)+\Courant{Hx,Hy,Hz}_\h,\quad\forall x,y,z\in \g.\label{0cochain}
\end{eqnarray}

\begin{pro}\label{0co}
With the notations above, $\delta(x,y)$ defined by \eqref{0cochain} is a $1$-cocycle of the Lie-Yamaguti algebra $(\g,[\cdot,\cdot]_\g,\Courant{\cdot,\cdot,\cdot}_\g)$ with coefficients in the representation $(\h;\rho_H,\mu_H)$.
\end{pro}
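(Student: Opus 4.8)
My plan is to verify directly that $f:=\delta(x,y)$ satisfies the two Yamaguti cocycle equations, after first putting it into a more tractable shape. Fix $x,y\in\g$; comparing the definition \eqref{0cochain} with the crossed-homomorphism identity \eqref{chomo2} rewrites $f\in C^1_{\rm LieY}(\g,\h)=\Hom(\g,\h)$ as
\[
f(z)=H\Courant{x,y,z}_\g-D(x,y)(Hz),\qquad z\in\g .
\]
The assertion is that $f$ is a Yamaguti $1$-cocycle with coefficients in the representation $(\h;\rho_H,\mu_H)$ produced in the preceding proposition; that is, recalling \eqref{cohomology3}--\eqref{cohomology4}, that $\delta_{\rm I}(f)=0$ and $\delta_{\rm II}(f)=0$, where in those formulas $(\rho,\mu,D)$ is replaced by $(\rho_H,\mu_H,D_H)$ of \eqref{newrep1}--\eqref{newrep3}. (This is precisely the compatibility making the complex obtained by adjoining $\wedge^2\g$ in degree $0$ with differential $\delta$ a cochain complex.)

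For $\delta_{\rm I}(f)=0$ I would expand $\rho_H(z)f(w)-\rho_H(w)f(z)-f([z,w]_\g)$ using $\rho_H(z)=[Hz,\,\cdot\,]_\h+\rho(z)$ and the above form of $f$: the summand $[Hz,D(x,y)(Hw)]_\h$ drops because $D(x,y)(Hw)\in C(\h)$; the summand $[Hz,H\Courant{x,y,w}_\g]_\h$ becomes $H[z,\Courant{x,y,w}_\g]_\g$ plus lower-order terms via \eqref{chomo1}; the ensuing $\rho(\Courant{x,y,w}_\g)$ is traded for $[D(x,y),\rho(w)]$ via the representation axiom \eqref{RLYe}; the term $H\Courant{x,y,[z,w]_\g}_\g$ is opened by \eqref{LY3} in $\g$; and $D(x,y)(H[z,w]_\g)$ is dealt with through \eqref{chomo1} together with the action relation $D(x,y)[\,\cdot\,,\,\cdot\,]_\h=0$. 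Everything then cancels in pairs; this part is short. For $\delta_{\rm II}(f)=0$ the same scheme applies, with more terms. After expanding $D_H(z,w)f(t)+\mu_H(w,t)f(z)-\mu_H(z,t)f(w)-f(\Courant{z,w,t}_\g)$, I would apply \eqref{fundamental} in $\g$ to $\Courant{x,y,\Courant{z,w,t}_\g}_\g$ and then \eqref{chomo2} to each of the four resulting terms $H\Courant{\Courant{x,y,z}_\g,w,t}_\g$, $H\Courant{z,\Courant{x,y,w}_\g,t}_\g$, $H\Courant{z,w,\Courant{x,y,t}_\g}_\g$, $H\Courant{z,w,t}_\g$; a large batch of terms then cancels directly against the $\Courant{H\,\cdot\,,H\,\cdot\,,\,\cdot\,}_\h$- and $\mu$-parts of the first three summands. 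What survives splits into three families: (i) combinations $[D(x,y),\mu(p,q)](Hr)$ against $\mu(\Courant{x,y,p}_\g,q)(Hr)+\mu(p,\Courant{x,y,q}_\g)(Hr)$, which cancel by \eqref{RLY5}; (ii) the combination $[D(x,y),D(z,w)](Ht)$ against $D(\Courant{x,y,z}_\g,w)(Ht)+D(z,\Courant{x,y,w}_\g)(Ht)$, which cancels by the identity $D(\Courant{x,y,z}_\g,w)+D(z,\Courant{x,y,w}_\g)=[D(x,y),D(z,w)]$ recorded in the Lemma after Remark~\ref{rmk:rep}; and (iii) a residual sum of triple brackets of $\h$, each with one entry of the form $D(x,y)(H\,\cdot\,)$, together with a term $D(x,y)\Courant{Hz,Hw,Ht}_\h$.

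The main obstacle is family (iii). The term $D(x,y)\Courant{Hz,Hw,Ht}_\h$ vanishes since the action forces $D(x,y)$ to annihilate every bracket of $\h$; for the residual triple brackets one uses that $D(x,y)(Hu)\in C(\h)$ and that, combining the Lie--Yamaguti identities \eqref{LY1}--\eqref{fundamental} of $\h$ with the action conditions, a triple bracket of $\h$ one of whose arguments is central vanishes. Making this last point precise --- and, more generally, marshalling the $\delta_{\rm II}$-expansion (several dozen terms before cancellation) by invoking the identities in the right order: first \eqref{fundamental} and \eqref{LY3} in $\g$, then \eqref{chomo1}--\eqref{chomo2}, then the representation axioms \eqref{RLYb}--\eqref{RLY5} for $(\rho,\mu)$, finally the action conditions --- is where the real work lies. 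Once $\delta_{\rm I}(f)=0$ and $\delta_{\rm II}(f)=0$ are in hand, $f=\delta(x,y)$ is a $1$-cocycle, i.e. lies in $Z^1_{\rm LieY}(\g,\h)=H^1_{\rm LieY}(\g,\h)$, which is the assertion.
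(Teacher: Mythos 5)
Your proposal is correct and takes essentially the same route as the paper's proof, namely a direct verification that $\delta_{\rm I}$ and $\delta_{\rm II}$ (with coefficients in $(\h;\rho_H,\mu_H)$) annihilate $\delta(x,y)$ by expanding everything and cancelling via the crossed-homomorphism identities \eqref{chomo1}--\eqref{chomo2}, the representation axioms, the action conditions, and the Lie--Yamaguti identities of $\g$ and $\h$; your preliminary rewriting $\delta(x,y)z=H\Courant{x,y,z}_\g-D(x,y)(Hz)$ merely reorganizes the same cancellations. The one auxiliary fact you isolate as the crux --- that a triple bracket of $\h$ with one central entry vanishes --- is indeed needed (the paper uses it silently in its ``$=0$'' steps), and it does hold: centrality kills the binary brackets and, by skew-symmetry in the first two arguments, the first two slots of the ternary bracket, so \eqref{LY1} applied with the central element as first argument forces the remaining term $\Courant{y,z,c}$ to vanish.
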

\begin{proof}
For all $x_1,x_2,x_3\in \g$, we have
\begin{eqnarray*}
~ &&\delta_{\rm I}\big(\delta(x,y)\big)(x_1,x_2)\\
~ &=&\rho_H(x_1)\delta(x,y)x_2-\rho_H(x_2)\delta(x,y)x_1-\delta(x,y)\big([x_1,x_2]_\g\big)\\
~ &=&\rho_H(x_1)\Big(\mu(y,x_2)(Hx)-\mu(x,x_2)(Hy)+\Courant{Hx,Hy,Hx_2}_\h\Big)\\
~ &&-\rho_H(x_2)\Big(\mu(y,x_1)(Hx)-\mu(x,x_1)(Hy)+\Courant{Hx,Hy,Hx_1}_\h\Big)\\
~ &&-\mu(y,[x_1,x_2]_\g)(Hx)+\mu(x,[x_1,x_2]_\g)(Hy)+\Courant{Hx,Hy,H[x_1,x_2]_\g}_\h\\
~ &=&[Hx_1,\mu(y,x_2)(Hx)]_\h+\rho(x_1)\mu(y,x_2)(Hx)-[Hx_1,\mu(x,x_2)(Hy)]_\h\\
~ &&-\rho(x_1)\mu(x,x_2)(Hy)+[Hx_1,\Courant{Hx,Hy,Hx_2}_\h]_\h+\rho(x_1)\Courant{Hx,Hy,Hx_2}_\h\\
~ &&-[Hx_2,\mu(y,x_1)(Hx)]_\h-\rho(x_2)\mu(y,x_1)(Hx)+[Hx_2,\mu(x,x_1)(Hy)]_\h\\
~ &&+\rho(x_2)\mu(x,x_1)(Hy)-[Hx_2,\Courant{Hx,Hy,Hx_1}_\h]_\h-\rho(x_2)\Courant{Hx,Hy,Hx_1}_\h\\
~ &&-\mu(y,[x_1,x_2]_\g)(Hx)+\mu(x,[x_1,x_2]_\g)(Hy)+\Courant{Hx,Hy,[Hx_1,Hx_2]_\h}_\h\\
&=&0,
\end{eqnarray*}
and
\begin{eqnarray*}
~ &&\delta_{\rm II}\big(\delta(x,y)\big)(x_1,x_2,x_3)\\
~ &=&D_H(x_1,x_2)\delta(x,y)x_3+\mu_H(x_2,x_3)\delta(x,y)x_1-\mu_H(x_1,x_3)\delta(x,y)x_2-\delta(x,y)\Courant{x_1,x_2,x_3}_\g\\
~ &=&D_H(x_1,x_2)\Big(\mu(y,x_3)(Hx)-\mu(x,x_3)(Hy)+\Courant{Hx,Hy,Hx_3}_\h\Big)\\
~ &&+\mu_H(x_2,x_3)\Big(\mu(y,x_1)(Hx)-\mu(x,x_1)(Hy)+\Courant{Hx,Hy,Hx_1}_\h\Big)\\
~ &&-\mu_H(x_1,x_3)\Big(\mu(y,x_2)(Hx)-\mu(x,x_2)(Hy)+\Courant{Hx,Hy,Hx_2}_\h\Big)\\
~ &&-\mu(y,\Courant{x_1,x_2,x_3}_\g)(Hx)+\mu(x,\Courant{x_1,x_2,x_3}_\g)(Hy)-\Courant{Hx,Hy,H\Courant{x_1,x_2,x_3}_\g}_\h\\
~ &=&\Courant{Hx_1,Hx_2,\mu(y,x_3)(Hx)}_\h+D(x_1,x_2)\mu(y,x_3)(Hx)-\Courant{Hx_1,Hx_2,\mu(x,x_3)(Hy)}_\h\\
~ &&-D(x_1,x_2)\mu(x,x_3)(Hy)+\Courant{Hx_1,Hx_2,\Courant{Hx,Hy,Hx_3}_\h}_\h+D(x_1,x_2)\Courant{Hx,Hy,Hx_3}_\h\\
~ &&+\Courant{\mu(y,x_1)(Hx),Hx_2,Hx_3}_\h+\mu(x_2,x_3)\mu(y,x_1)(Hx)-\Courant{\mu(x,x_1)(Hy),Hx_2,Hx_3}_\h\\
~ &&-\mu(x_2,x_3)\mu(x,x_1)(Hy)+\Courant{\Courant{Hx,Hy,Hx_1}_\h,Hx_2,Hx_3}_\h+\mu(x_2,x_3)\Courant{Hx,Hy,Hx_1}_\h\\
~ &&-\Courant{\mu(y,x_2)(Hx),Hx_1,Hx_3}_\h-\mu(x_1,x_3)\mu(y,x_2)(Hx)+\Courant{\mu(x,x_2)(Hy),Hx_1,Hx_3}_\h\\
~ &&+\mu(x_1,x_3)\mu(x,x_2)(Hy)-\Courant{\Courant{Hx,Hy,Hx_2}_\h,Hx_1,Hx_3}_\h-\mu(x_1,x_3)\Courant{Hx,Hy,Hx_2}_\h\\
~ &&-\mu(y,\Courant{x_1,x_2,x_3}_\g)(Hx)+\mu(x,\Courant{x_1,x_2,x_3}_\g)(Hy)-\Courant{Hx,Hy,\Courant{Hx_1,Hx_2,Hx_3}_\h}_\h\\
~ &=&0,
\end{eqnarray*}
which implies that $\delta(x,y)$ is a $1$-cocycle. This finishes the proof.
\end{proof}

By now, we can establish the cohomology of crossed homomorphisms between Lie-Yamaguti algebra as follows.
Let $H:\g\longrightarrow\h$ be a crossed homomorphism from a Lie-Yamaguti algebra $(\g,[\cdot,\cdot]_\g,\Courant{\cdot,\cdot,\cdot}_\g)$ to another \LYA ~$(\h,[\cdot,\cdot]_\h,\Courant{\cdot,\cdot,\cdot}_\h)$ with respect to an action $(\rho,\mu)$. Define the set of $n$-cochains to be
\begin{eqnarray*}
\frkC_H^p(\g,\h)=
\begin{cases}
C_{LieY}^p(\g,\h), & p\geq 1,\\
\wedge^2\g, & p=0.
\end{cases}
\end{eqnarray*}

Define $\partial:\frkC_H^p(\g,\h)\longrightarrow\frkC_H^{p+1}(\g,\h)$ to be
\begin{eqnarray*}
\partial=
\begin{cases}
\delta^H, & p\geq 1,\\
\delta, & p=0,
\end{cases}
\end{eqnarray*}
where the map $\delta^H$ is the corresponding coboundary map given by \eqref{cohomology1}-\eqref{cohomology4} with coefficients in the representation $(\h;\rho_H,\mu_H)$.
Then combining with Proposition \ref{0co}, we obtain that $\Big(\bigoplus_{n=0}^\infty\frkC_H^n(\g,\h),\partial\Big)$ is a complex. Denote the set of $n$-cochains by $\huaZ_H^n(\g,\h)$, and denote the set of $n$-cobonudaries by $\huaB_H^n(\g,\h)$. The resulting $n$-th cohomology group is given by
$$\huaH_H^n(\g,\h):=\huaZ_H^n(\g,\h)/\huaB_H^n(\g,\h), n\geq 0.$$

\begin{defi}
The cohomology of the cochian complex $\Big(\bigoplus_{n=0}^\infty\frkC_H^n(\g,\h),\partial\Big)$ is called the {\bf cohomology} of the crossed homomorphism $H$.
\end{defi}

At the end of this section, we show that a certain homomorphism between two crossed homomorphisms induces a homomorphism between the corresponding cohomology groups. Let $H$ and $H'$ be two crossed homomorphisms from a Lie-Yamaguti algebra $(\g,[\cdot,\cdot]_\g,\Courant{\cdot,\cdot,\cdot}_\g)$ to another Lie-Yamaguti algebra $(\h,[\cdot,\cdot]_\h,\Courant{\cdot,\cdot,\cdot}_\h)$ with respect to an action $(\rho,\mu)$. Let $(\psi_\g,\psi_\h)$ be a homomorphism from $H'$ to $H$, where $\psi_\g$ is invertible. For $n\geqslant 2$, define a linear map $p:\frkC_{H'}^n(\g,\h)\longrightarrow\frkC_{H}^n(\g,\h)$ to be
\begin{eqnarray*}
p_{\rm I}(f)(\frkX_1,\cdots,\frkX_n)&=&\psi_\h\Big(f\big(\psi_\g^{-1}(\frkX_1),\cdots,\psi_\g^{-1}(\frkX_n)\big)\Big),\\
p_{\rm II}(g)(\frkX_1,\cdots,\frkX_n,x)&=&\psi_\h\Big(g\big(\psi_\g^{-1}(\frkX_1),\cdots,\psi_\g^{-1}(\frkX_n),\psi_\g^{-1}(x)\big)\Big), \quad\forall (f,g)\in \frkC_{H'}^n(\g,\h).
\end{eqnarray*}
Here $\frkX_k=x_k\wedge y_k\in \wedge^2\g, ~k=1,2,\cdots,n,~x\in \g$, and we use a notation $\psi_\g^{-1}(\frkX_k)=\psi_\g^{-1}(x_k)\wedge\psi_\g^{-1}(y_k),~k=1,2,\cdots,n.$ In particular, for $n=1$, $p:\frkC_{H'}^1(\g,\h)\longrightarrow\frkC_{H}^1(\g,\h)$ is defined to be
$$p(f)(x)=\psi_\h\Big(f\big(\psi_\g^{-1}(x)\big)\Big), \quad\forall x\in \g, \ f\in \frkC_{H'}^1(\g,\h).$$

\begin{thm}
With the notations above, $p$ is  a cochain map from a cochain $(\oplus_{n=1}^\infty\frkC_{H'}^n(\g,\h),\delta^{H'})$ to $(\oplus_{n=1}^\infty\frkC_{H}^n(\g,\h),\delta^{H})$. Consequently, $p$ induces a homomorphism $p_*:\huaH_{H'}^n(\g,\h)\longrightarrow\huaH_{H}^n(\g,\h)$ between cohomology groups.
\end{thm}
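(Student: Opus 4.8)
The plan is to verify that $p$ commutes with the coboundary operators, i.e.\ that $p\circ\delta^{H'}=\delta^{H}\circ p$ as maps $\frkC^{n}_{H'}(\g,\h)\to\frkC^{n+1}_{H}(\g,\h)$ for every $n\geqslant 1$; the assertion about $p_*$ then follows at once, because a cochain map carries cocycles to cocycles and coboundaries to coboundaries. The whole argument hinges on one auxiliary fact, which I would establish first: $\psi_\h$ intertwines the $\g$-representation $(\h;\rho_{H'},\mu_{H'})$ with the $\g$-representation $(\h;\rho_H,\mu_H)$ along $\psi_\g$.

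\emph{Intertwining lemma.} For all $x,y\in\g$ and $u\in\h$,
\begin{align*}
\psi_\h\big(\rho_{H'}(x)u\big)&=\rho_H\big(\psi_\g(x)\big)\psi_\h(u),\\
\psi_\h\big(\mu_{H'}(x,y)u\big)&=\mu_H\big(\psi_\g(x),\psi_\g(y)\big)\psi_\h(u),\\
\psi_\h\big(D_{H'}(x,y)u\big)&=D_H\big(\psi_\g(x),\psi_\g(y)\big)\psi_\h(u).
\end{align*}
To prove the first identity, expand $\rho_{H'}(x)u=[H'x,u]_\h+\rho(x)u$ using \eqref{newrep1}, apply $\psi_\h$, invoke that $\psi_\h$ is a Lie-Yamaguti algebra homomorphism together with \eqref{crosshomo1}, and finally rewrite $\psi_\h(H'x)=H(\psi_\g(x))$ by \eqref{crho1}; the outcome $[H(\psi_\g x),\psi_\h u]_\h+\rho(\psi_\g x)\psi_\h u$ equals $\rho_H(\psi_\g x)\psi_\h u$ by \eqref{newrep1} again. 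The second identity is obtained identically from \eqref{newrep2}, now using \eqref{crosshomo2} and the fact that $\psi_\h$ preserves $\Courant{\cdot,\cdot,\cdot}_\h$; the third follows from \eqref{newrep3} and \eqref{crosshomo3} (or, alternatively, as a formal consequence of the first two via \eqref{rep}).

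With the lemma in hand, I would substitute into the explicit coboundary formulas. Take $(f,g)\in\frkC^{n+1}_{H'}(\g,\h)$ with $n\geqslant1$, elements $\frkX_1,\dots,\frkX_{n+1}\in\wedge^2\g$ and $z\in\g$, and apply $\psi_\h$ to $\big(\delta^{H'}_{\rm I}(f,g)\big)(\frkX_1,\dots,\frkX_{n+1})$ and $\big(\delta^{H'}_{\rm II}(f,g)\big)(\frkX_1,\dots,\frkX_{n+1},z)$, these being given by \eqref{cohomology1}--\eqref{cohomology2} with $\rho,\mu,D$ replaced by $\rho_{H'},\mu_{H'},D_{H'}$. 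Each summand there has one of two shapes: an operator $\rho_{H'}$, $\mu_{H'}$ or $D_{H'}$ applied to a value of $f$ or $g$; or a value of $f$ or $g$ carrying inside its arguments a bracket $[\cdot,\cdot]_\g$, a ternary bracket $\Courant{\cdot,\cdot,\cdot}_\g$, or a circle operation $\frkX_k\circ\frkX_l$. Pushing $\psi_\h$ inward, the first kind of summand is converted by the intertwining lemma into the corresponding term with $\rho_H(\psi_\g\cdot)$, $\mu_H(\psi_\g\cdot,\psi_\g\cdot)$, or $D_H(\psi_\g\cdot,\psi_\g\cdot)$; for the second kind I use that $\psi_\g$, being an invertible Lie-Yamaguti algebra homomorphism, has $\psi_\g^{-1}$ also a Lie-Yamaguti algebra homomorphism, so $\psi_\g^{-1}$ commutes with $[\cdot,\cdot]_\g$ and $\Courant{\cdot,\cdot,\cdot}_\g$, hence $\psi_\g^{-1}(\frkX_k\circ\frkX_l)=\psi_\g^{-1}(\frkX_k)\circ\psi_\g^{-1}(\frkX_l)$. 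Since the definition of $p$ feeds the arguments $\psi_\g^{-1}(\frkX_i)$ and $\psi_\g^{-1}(z)$ into $f$ and $g$, these identities are exactly what turns the transformed expression into $\big(\delta^{H}_{\rm I}(p(f,g))\big)(\frkX_1,\dots,\frkX_{n+1})$ and $\big(\delta^{H}_{\rm II}(p(f,g))\big)(\frkX_1,\dots,\frkX_{n+1},z)$: all four sums, with their signs and index ranges, match term by term. The degenerate case $n=1$ is handled the same way using \eqref{cohomology3}--\eqref{cohomology4}, and for the degree-$0$ piece one checks directly from \eqref{0cochain} that $p\circ\delta=\delta\circ p_0$ with $p_0:=\wedge^2\psi_\g$, i.e.\ $p_0(x\wedge y)=\psi_\g(x)\wedge\psi_\g(y)$; this also covers $\huaH^1$.

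Having established that $p$ is a cochain map, it restricts to maps $\huaZ^n_{H'}(\g,\h)\to\huaZ^n_{H}(\g,\h)$ and $\huaB^n_{H'}(\g,\h)\to\huaB^n_{H}(\g,\h)$, and therefore descends to $p_*\colon\huaH^n_{H'}(\g,\h)\to\huaH^n_{H}(\g,\h)$. I expect no conceptual obstacle: the only place where care is genuinely needed is the combinatorial bookkeeping in \eqref{cohomology1}--\eqref{cohomology2}, namely making sure the hat-omission pattern, the exact slot into which $\frkX_k\circ\frkX_l$ is inserted, and the additional $\Courant{x_k,y_k,z}_\g$ term in \eqref{cohomology2} all remain intact under the substitution $\frkX_i\mapsto\psi_\g^{-1}(\frkX_i)$ — which is guaranteed precisely because $\psi_\g^{-1}$ respects both brackets of $\g$. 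No property of crossed homomorphisms beyond the intertwining lemma is required.
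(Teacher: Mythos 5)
Your proposal is correct and follows essentially the same route as the paper: the paper likewise verifies $p\circ\delta^{H'}=\delta^{H}\circ p$ by direct substitution into the coboundary formulas, with the key step being exactly your intertwining identity $\mu_H(y,x)\psi_\h(u)=\psi_\h\big(\mu_{H'}(\psi_\g^{-1}(y),\psi_\g^{-1}(x))u\big)$ (and its analogues for $\rho_H$ and $D_H$), which the paper derives inline from \eqref{crho1}--\eqref{crosshomo2} rather than isolating it as a lemma. Your additional remark on the degree-$0$ piece is a harmless supplement not needed for the statement, which concerns only $n\geqslant 1$.
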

\begin{proof}
For all $(f,g)\in \frkC_{H'}^n(\g,\h)~(n\geq2),$, and for all $\frkX_k=x_k\wedge y_k\in \wedge^2\g,~ k=1,2,\cdots,n,~x\in \g$, we have
\begin{eqnarray*}
~ &&\delta_{\rm II}^H\Big(p_{\rm I}(f),p_{\rm II}(g)\Big)(\frkX_1,\cdots,\frkX_n,x)\\
~ &=&(-1)^{n-1}\Big(\mu_H(y_n,x)p_{\rm II}(g)(\frkX_1,\cdots,\frkX_{n-1},x_n)-\mu_H(x_n,x)p_{\rm II}(g)(\frkX_1,\cdots,\frkX_{n-1},y_n)\Big)\\
~ &&+\sum_{k=1}^n(-1)^{k+1}D_H(\frkX_k)p_{\rm II}(g)(\frkX_1,\cdots,\widehat{\frkX_k},\cdots,\frkX_{n},x)\\
~ &&+\sum_{k<l}^n(-1)^{k}p_{\rm II}(g)(\frkX_1,\cdots,\widehat{\frkX_k},\cdots,\frkX_k\circ\frkX_l,\cdots,\frkX_{n},x)\\
~ &&+\sum_{k=1}^n(-1)^{k}p_{\rm II}(g)(\frkX_1,\cdots,\widehat{\frkX_k},\cdots,\frkX_{n},\Courant{\frkX,x}_\g)\\
~ &=&(-1)^{n-1}\Big(\mu_H(y_n,x)\psi_\h\big(g(\psi_\g^{-1}(\frkX_1),\cdots,\psi_\g^{-1}(\frkX_{n-1}),\psi_\g^{-1}(x_n)\big)\\
~ &&\quad\quad\quad-\mu_H(x_n,x)\psi_\h\big(g(\psi_\g^{-1}(\frkX_1),\cdots,\psi_\g^{-1}(\frkX_{n-1}),\psi_\g^{-1}(y_n)\big)\Big)\\
~ &&+\sum_{k=1}^n(-1)^{k+1}D_H(\frkX_k)\psi_\h\big(g(\psi_\g^{-1}(\frkX_1),\cdots,\widehat{\psi_\g^{-1}(\frkX_k)},\cdots,\psi_\g^{-1}(\frkX_{n}),\psi_\g^{-1}(x)\big)\\
~ &&+\sum_{k<l}(-1)^{k}\psi_\h\big(g(\psi_\g^{-1}(\frkX_1),\cdots,\widehat{\psi_\g^{-1}(\frkX_k)},\cdots,\psi_\g^{-1}(\frkX_k)\circ\psi_\g^{-1}(\frkX_l),
\cdots,\psi_\g^{-1}(\frkX_{n}),\psi_\g^{-1}(x)\big)\\
~ &&+\sum_{k=1}^n(-1)^{k}\psi_\h\big(g(\psi_\g^{-1}(\frkX_1),\cdots,\widehat{\psi_\g^{-1}(\frkX_k)},\cdots,
\psi_\g^{-1}(\frkX_{n}),\psi_\g^{-1}\Courant{\frkX,x}_\g)\big)\\
~ &=&\psi_\h\Big((-1)^{n-1}\Big(\mu_{H'}(\psi_\g^{-1}(y_n),\psi_\g^{-1}(x))g\big(\psi_\g^{-1}(\frkX_1),\cdots,\psi_\g^{-1}(\frkX_{n-1}),\psi_\g^{-1}(x_n)\big)\\
~ &&\quad\quad\quad\quad-\mu_{H'}(\psi_\g^{-1}(x_n),\psi_\g^{-1}(x))g\big(\psi_\g^{-1}(\frkX_1),\cdots,\psi_\g^{-1}(\frkX_{n-1}),\psi_\g^{-1}(y_n)\big)\Big)\Big)\\
~ &&+\psi_\h\Big(\sum_{k=1}^n(-1)^{k+1}D_{H'}(\psi_\g^{-1}(\frkX_k))g\big(\psi_\g^{-1}(\frkX_1),\cdots,\widehat{\psi_\g^{-1}(\frkX_k)},\cdots,
\psi_\g^{-1}(\frkX_{n}),\psi_\g^{-1}(x)\big)\Big)\\
~ &&+\psi_\h\Big(\sum_{k<l}(-1)^{k}g\big(\psi_\g^{-1}(\frkX_1),\cdots,\widehat{\psi_\g^{-1}(\frkX_k)},\cdots,\psi_\g^{-1}(\frkX_k)\circ\psi_\g^{-1}(\frkX_l),
\cdots,\psi_\g^{-1}(\frkX_{n}),\psi_\g^{-1}(x)\big)\Big)\\
~ &&+\psi_\h\Big(\sum_{k=1}^n(-1)^{k}g\big(\psi_\g^{-1}(\frkX_1),\cdots,\widehat{\psi_\g^{-1}(\frkX_k)},\cdots,
\psi_\g^{-1}(\frkX_{n}),\Courant{\psi_\g^{-1}(\frkX),\psi_\g^{-1}(x)}_\g)\big)\Big)\\
~ &=&\psi_\h\Big(\delta_{\rm II}^{H'}(f,g)\big(\psi_\g^{-1}(\frkX_1),\cdots,\psi_\g^{-1}(\frkX_{n}),\psi_\g^{-1}(x)\big)\Big)\\
~ &=&p_{\rm II}\Big(\delta_{\rm II}^{H'}(f,g)\Big)(\frkX_1,\cdots,\frkX_n,x).
\end{eqnarray*}
Note that the third equality holds since for all $x,y_n\in \g,~u\in \h$, by using $(\psi_\g,\psi_\h)$ is a homomorphism from crossed homomorphism $H'$ to crossed homomorphism $H$, we have
\begin{eqnarray*}
~ \mu_H(y_n,x)\psi_\h(u)&=&\Courant{\psi_\h(u),Hy_n,Hx}_\h+\mu(y_n,x)\psi_\h(u)\\
~&=&\Courant{\psi_\h(u),\psi_\h\circ H'\circ \psi_\g^{-1}(y_n),\psi_\h\circ H'\circ \psi_\g^{-1}(x)}_\h+\psi_\h\mu(\psi_\g^{-1}(y_n),\psi_\g^{-1}(x))u\\
~ &=&\psi_\h\Big(\Courant{u,H'\circ \psi_\g^{-1}(y_n),H'\circ \psi_\g^{-1}(x)}_\h+\mu(\psi_\g^{-1}(y_n),\psi_\g^{-1}(x))u\Big)\\
~ &=&\psi_\h\Big(\mu_{H'}(\psi_\g^{-1}(y_n),\psi_\g^{-1}(x))\Big).
\end{eqnarray*}
Thus we obtain that $p_{\rm II}\Big(\delta_{\rm II}^{H'}(f,g)\Big)=\delta_{\rm II}^H\Big(p_{\rm I}(f),p_{\rm II}(g)\Big)$ for all $(f,g)\in \frkC_{H'}^n(\g,\h)$ $(n\geq 2)$. Similarly, we can show that $p_{\rm I}\Big(\delta_{\rm I}^{H'}(f,g)\Big)=\delta_{\rm I}^H\Big(p_{\rm I}(f),p_{\rm II}(g)\Big)$ for all $(f,g)\in \frkC_{H'}^n(\g,\h)$ $(n\geq 2)$. And moreover, it is easy to see that the case of $n=1$ is still valid. This finishes the proof.
\end{proof}

\section{Deformatons of crossed homomorphisms between Lie-Yamaguti algebras}

In this section, we use the cohomology theory established in the former section to characterize deformations of crossed homomorphisms between Lie-Yamaguti algebras.

\subsection{Linear deformations of crossed homomorphisms between Lie-Yamaguti algebras}
In this subsection, we use the cohomology constructed in the former section to characterize the linear deformations of crossed homomorphisms between Lie-Yamaguti algebras.

\begin{defi}
Let $H:\g\longrightarrow\h$ be a crossed homomorphism from a Lie-Yamaguti algebra $(\g,[\cdot,\cdot]_\g,\Courant{\cdot,\cdot,\cdot}_\g)$ to another Lie-Yamaguti algebra $(\h,[\cdot,\cdot]_\h,\Courant{\cdot,\cdot,\cdot}_\h)$ with respect to an action $(\h;\rho,\mu)$. Let $\frkH:\g\longrightarrow\h$ be  a linear map. If $H_t:=H+t\frkH$ is still a crossed homomorphism for all $t$, then we say that $\frkH$ generates a {\bf linear deformation} of the crossed homomorphism $H$.
\end{defi}

It is easy to see that $\frkH$ generates a linear deformation of the crossed homomorphism $H$, then  for all $x,y,z\in \g$, there hold that
\begin{eqnarray*}
\frkH[x,y]_\g&=&\rho(x)(\frkH y)-\rho(y)(\frkH x)+[Hx,\frkH y]_\h+[\frkH x,Hy]_\h,\\
\frkH\Courant{x,y,z}_\g&=&D(x,y)(\frkH z)+\mu(y,z)(\frkH x)-\mu(x,z)(\frkH y)+\Courant{\frkH x,Hy,Hz}_\h\\
~ &&+\Courant{H x,\frkH y,Hz}_\h+\Courant{H x,Hy,\frkH z}_\h,
\end{eqnarray*}
which means that $\frkH$ is a $2$-cocycle of the crossed homomorphism $H$.

\begin{defi}
Let $H:\g\longrightarrow\h$ be a crossed homomorphism from a Lie-Yamaguti algebra $(\g,[\cdot,\cdot]_\g,\Courant{\cdot,\cdot,\cdot}_\g)$ to another Lie-Yamaguti algebra $(\h,[\cdot,\cdot]_\h,\Courant{\cdot,\cdot,\cdot}_\h)$ with respect to an action $(\rho,\mu)$.
 \begin{itemize}
 \item[(i)] Two linear deformations $H_t^1=H+t\frkH_1$ and $H_t^2=H+t\frkH_2$ are called {\bf equivalent} if there exists an element $\frkX\in \wedge^2\g$ such that $({\Id}_\g+t\frkL(\frkX),{\Id}_\h+tD(\frkX))$ is a homomorphism from $H_t^2$ to $H_t^1$.
 \item[(ii)] A linear deformation $H_t=H+t\frkH$ of the crossed homomorphism $H$ is called {\bf trivial} if it is equivalent to $H$.
 \end{itemize}
\end{defi}

If two linear deformations $H_t^2$ and $H_t^1$ are equivalent, then ${\Id}_\g+t\frkL(\frkX)$ is a homomorphism on the Lie-Yamaguti algebra $\g$, which implies that
\begin{eqnarray}
[\Courant{\frkX,y}_\g,\Courant{\frkX,z}_\g]_\g&=&0,\label{nij1}\\
~\Courant{\Courant{\frkX,y}_\g,\Courant{\frkX,z}_\g,t}_\g+\Courant{\Courant{\frkX,y}_\g,z,\Courant{\frkX,t}_\g}_\g+\Courant{y,\Courant{\frkX,z}_\g,\Courant{\frkX,t}_\g}_\g&=&0,\\
~\Courant{\Courant{\frkX,y}_\g,\Courant{\frkX,z}_\g,\Courant{\frkX,t}_\g}_\g&=&0
\end{eqnarray}
for all $y,z,t\in \g$.

Note that by Eqs. \eqref{crosshomo1} and \eqref{crosshomo2}, we obtain that for all $y,z\in \g$,
\begin{eqnarray}
\rho(\Courant{\frkX,y}_\g)D(\frkX)&=&0,\\
~\mu(\Courant{\frkX,y}_\g,z)D(\frkX)+\mu(y,\Courant{\frkX,z}_\g)D(\frkX)+\mu(\Courant{\frkX,y}_\g,\Courant{\frkX,z}_\g)&=&0,\\
~ \mu(\Courant{\frkX,y}_\g,\Courant{\frkX,z}_\g)D(\frkX)&=&0.\label{nij2}
\end{eqnarray}

Moreover, Eq. \eqref{crho1} yields that for all $x,y,z\in \g$,
\begin{eqnarray*}
\Big({\Id}_\h+tD(x,y)\Big)\Big(H+t\frkH_2\Big)z=\Big(H+t\frkH_1\Big)\Big({\Id}_\g+t\frkL(x,y)\Big)z,
\end{eqnarray*}
which implies that
\begin{eqnarray}
\label{cohomology}\frkH_2z-\frkH_1z=\mu(y,z)(H x)-\mu(x,z)(H y)+\Courant{Hx,Hy,Hz}_\h=\partial(\frkX)z.
\end{eqnarray}

Thus by \eqref{cohomology}, we have the following theorem.

\begin{thm}
Let $H:\g\longrightarrow\h$ be a crossed homomorphism from a Lie-Yamaguti algebra $(\g,[\cdot,\cdot]_\g,\Courant{\cdot,\cdot,\cdot}_\g)$ to another Lie-Yamaguti algebra $(\h,[\cdot,\cdot]_\h,\Courant{\cdot,\cdot,\cdot}_\h)$ with respect to an action $(\h;\rho,\mu)$. If two linear deformations $H_t^1=H+t\frkH_1$ and $H_t^2=H+t\frkH_2$ are equivalent, then $\frkH_1$ and $\frkH_2$ are in the same cohomology class in $\huaH_H^2(\g,\h)$.
\end{thm}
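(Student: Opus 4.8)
The plan is to read the cohomological relation between $\frkH_1$ and $\frkH_2$ straight off the data defining an equivalence, and then to recognize that relation as a $2$-coboundary. First I would record that each generator $\frkH_i$ is a $2$-cocycle: substituting $H_t^i=H+t\frkH_i$ into \eqref{chomo1}--\eqref{chomo2}, using that $H$ is already a crossed homomorphism to kill the $t^0$-part and that $\rho,\mu,D$ annihilate brackets in $\h$ so that the $t^2$- and $t^3$-parts vanish, the coefficient of $t$ is exactly $\delta^H(\frkH_i)=0$; hence $\frkH_i\in\huaZ_H^2(\g,\h)$ for $i=1,2$. This is the computation already displayed before the statement, so the two maps do define classes in $\huaH_H^2(\g,\h)$.

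Next I would exploit the equivalence: by hypothesis there is $\frkX=x\wedge y\in\wedge^2\g$ (extended bilinearly) such that $({\Id}_\g+t\frkL(\frkX),{\Id}_\h+tD(\frkX))$ is a homomorphism from $H_t^2$ to $H_t^1$. Unwinding the intertwining axiom \eqref{crho1} gives
\[
\bigl({\Id}_\h+tD(x,y)\bigr)\circ\bigl(H+t\frkH_2\bigr)=\bigl(H+t\frkH_1\bigr)\circ\bigl({\Id}_\g+t\frkL(x,y)\bigr),
\]
and comparing the coefficient of $t$ on both sides, using $\frkL(x,y)z=\Courant{x,y,z}_\g$ from Example \ref{ad}, yields $\frkH_2(z)-\frkH_1(z)=H\Courant{x,y,z}_\g-D(x,y)(Hz)$. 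Substituting \eqref{chomo2} for $H\Courant{x,y,z}_\g$ cancels the term $D(x,y)(Hz)$ and leaves
\[
\frkH_2(z)-\frkH_1(z)=\mu(y,z)(Hx)-\mu(x,z)(Hy)+\Courant{Hx,Hy,Hz}_\h=\bigl(\partial(\frkX)\bigr)(z),
\]
which is precisely \eqref{cohomology}, i.e.\ the value on $\frkX$ of the $0$-cochain coboundary map \eqref{0cochain}.

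Finally, since $\frkX\in\wedge^2\g=\frkC_H^0(\g,\h)$ and $\partial$ restricts to $\delta$ on $0$-cochains, the difference $\frkH_2-\frkH_1=\partial(\frkX)$ lies in $\huaB_H^2(\g,\h)$ (and is automatically a $2$-cocycle by Proposition \ref{0co}, consistent with the first step). Therefore $\frkH_1$ and $\frkH_2$ represent the same class in $\huaH_H^2(\g,\h)=\huaZ_H^2(\g,\h)/\huaB_H^2(\g,\h)$, which is the assertion. I do not expect any serious obstacle here; the only points that need care are the bookkeeping showing the $t$-linear part of \eqref{crho1} collapses to \eqref{0cochain} after invoking \eqref{chomo2}, and the remark that the remaining equivalence axioms — the $t^2$- and $t^3$-relations \eqref{nij1}--\eqref{nij2} together with \eqref{crosshomo1}--\eqref{crosshomo2} — are compatibility constraints on $\frkX$ alone and do not further constrain $\frkH_1,\frkH_2$, so the equivalence datum genuinely exists and is not over-determined.
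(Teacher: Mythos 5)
Your proposal is correct and follows essentially the same route as the paper: the paper's proof consists precisely of expanding the intertwining relation \eqref{crho1} to first order in $t$, invoking \eqref{chomo2} to cancel the $D(x,y)(Hz)$ term, and identifying the result with $\partial(\frkX)$ as in \eqref{cohomology} and \eqref{0cochain}. Your additional remarks (that each $\frkH_i$ is a cocycle, and that the higher-order equivalence constraints only restrict $\frkX$) match what the paper records in the surrounding discussion.
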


\begin{defi}\label{Nijelement}
Let $H:\g\longrightarrow\h$ be a crossed homomorphism from a Lie-Yamaguti algebra $(\g,[\cdot,\cdot]_\g,\Courant{\cdot,\cdot,\cdot}_\g)$ to another Lie-Yamaguti algebra $(\h,[\cdot,\cdot]_\h,\Courant{\cdot,\cdot,\cdot}_\h)$ with respect to an action $(\rho,\mu)$. If an element $\frkX\in \wedge^2\g$ satisfies Eqs. \eqref{nij1}-\eqref{nij2} and the following equality
\begin{eqnarray*}
D(\frkX)\Big(D(\frkX)(Hy)-H[\frkX,y]_\g\Big)=0,\quad\forall y\in \g,
\end{eqnarray*}
then $\frkX$ is called a {\bf Nijenhuis element} associated to $H$. Denote by $\mathsf{Nij}(H)$ the set of Nijenhuis elements associated to $H$.
\end{defi}

It is easy to see that a trivial deformation of a crossed homomorphism between Lie-Yamaguti algebras gives rise to a Nijenhuis element. However, the converse is also true.

\begin{thm}\label{Nijenhuis}
Let $H:\g\longrightarrow\h$ be a crossed homomorphism from a Lie-Yamaguti algebra $(\g,[\cdot,\cdot]_\g,\Courant{\cdot,\cdot,\cdot}_\g)$ to another Lie-Yamaguti algebra $(\h,[\cdot,\cdot]_\h,\Courant{\cdot,\cdot,\cdot}_\h)$ with respect to an action $(\rho,\mu)$. Then for any $\frkX\in \mathsf{Nij}(H)$, $H_t:=H+t\frkH$ with $\frkH=\delta^H(\frkX)$ is a linear deformation of $H$. Moreover, this deformation is trivial.
\end{thm}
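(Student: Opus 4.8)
The strategy is to absorb both claims into the single assertion that the block-diagonal map $\Phi_t:=(\mathrm{Id}_\g+t\frkL(\frkX))\oplus(\mathrm{Id}_\h+tD(\frkX))$ is a Lie-Yamaguti algebra endomorphism of the semidirect product $\g\ltimes_{\rho,\mu}\h$, together with one compatibility identity that encodes the last equation of Definition~\ref{Nijelement}. First I would rewrite $\frkH$. Since $H$ is a crossed homomorphism, \eqref{chomo2} identifies the cochain $\frkH=\delta^H(\frkX)$, computed from \eqref{0cochain}, with the value $H\Courant{\frkX,z}_\g-D(\frkX)(Hz)$ at each $z\in\g$; that is,
\[
\frkH=H\circ\frkL(\frkX)-D(\frkX)\circ H .
\]
In particular $\frkH+D(\frkX)\circ H=H\circ\frkL(\frkX)$, and the last displayed identity of Definition~\ref{Nijelement} becomes exactly $D(\frkX)\circ\frkH=0$. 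I would record these two facts at the outset.

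The key lemma is: for $\frkX\in\mathsf{Nij}(H)$, the map $\Phi_t(x+u):=(x+t\Courant{\frkX,x}_\g)+(u+tD(\frkX)u)$ is a Lie-Yamaguti algebra homomorphism of $\g\ltimes_{\rho,\mu}\h$ for every $t$. This is checked by expanding $\Phi_t[\cdot,\cdot]_{\rho,\mu}-[\Phi_t\cdot,\Phi_t\cdot]_{\rho,\mu}$ and the analogous ternary difference in powers of $t$: the constant term vanishes trivially; the coefficient of $t$ breaks into pieces that are precisely the fundamental identities \eqref{LY3}, \eqref{fundamental} on $\g$, the representation identities of Definition~\ref{defi:representation} (notably \eqref{RLYe} and \eqref{RLY5}) for $(\h;\rho,\mu)$, and the centrality/annihilation clauses of the action; the coefficients of $t^2$ and $t^3$ are exactly \eqref{nij1}, its two ternary companions, and the three identities \eqref{nij2}. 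Restricting $\Phi_t$ to the summands $\g$ and $\h$ already yields that $\mathrm{Id}_\g+t\frkL(\frkX)$ and $\mathrm{Id}_\h+tD(\frkX)$ are endomorphisms of $\g$ and of $\h$, and the mixed ($\g$–$\h$) components of the same computation give precisely the compatibilities \eqref{crosshomo1}, \eqref{crosshomo2} for the pair $(\mathrm{Id}_\g+t\frkL(\frkX),\mathrm{Id}_\h+tD(\frkX))$.

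With these in hand I would finish both assertions at once. Using $\frkH+D(\frkX)\circ H=H\circ\frkL(\frkX)$ and $D(\frkX)\circ\frkH=0$ one computes directly
\[
\big(\mathrm{Id}_\h+tD(\frkX)\big)\circ H_t=H+t\big(\frkH+D(\frkX)\circ H\big)+t^2\,D(\frkX)\circ\frkH=H\circ\big(\mathrm{Id}_\g+t\frkL(\frkX)\big),
\]
which is \eqref{crho1}. Combined with the key lemma, the pair $(\mathrm{Id}_\g+t\frkL(\frkX),\mathrm{Id}_\h+tD(\frkX))$ is then a homomorphism from $H_t$ to $H$; being of the prescribed form, this exhibits $H_t$ as a trivial deformation, provided we already know $H_t$ is a crossed homomorphism. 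For the latter, the displayed identity is exactly $\Phi_t\circ\phi_{H_t}=\phi_H\circ(\mathrm{Id}_\g+t\frkL(\frkX))$ in the notation of Theorem~\ref{thm1}; whenever $\Phi_t$ and $\mathrm{Id}_\g+t\frkL(\frkX)$ are invertible (all but finitely many $t$) this gives $\phi_{H_t}=\Phi_t^{-1}\circ\phi_H\circ(\mathrm{Id}_\g+t\frkL(\frkX))$, a composite of Lie-Yamaguti algebra homomorphisms, hence $\phi_{H_t}$ is one and $H_t$ is a crossed homomorphism by Theorem~\ref{thm1}. Since ``$\phi_{H_t}$ preserves the two brackets'' is a polynomial condition in $t$, it then holds for all $t$, so $H+t\frkH$ is indeed a linear deformation.

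The main obstacle is the key lemma: the order-by-order verification that $\Phi_t$ preserves both brackets of $\g\ltimes_{\rho,\mu}\h$ is lengthy, and the care required is to separate the terms that land in $C(\h)$ (hence are killed by the brackets of $\h$, or by $\rho(\Courant{\frkX,\cdot}_\g)D(\frkX)$ and $\mu(\Courant{\frkX,\cdot}_\g,\cdot)D(\frkX)$, etc.) from the terms that collapse, via \eqref{chomo1}–\eqref{chomo2}, to images under $H$ of brackets of $\g$ that vanish by \eqref{nij1} and its ternary companions. The reduction in the first paragraph and the bookkeeping in the last paragraph are short.
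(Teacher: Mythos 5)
Your proof is correct and follows essentially the same route as the paper's: the same reduction $\frkH=H\circ\frkL(\frkX)-D(\frkX)\circ H$ together with $D(\frkX)\circ\frkH=0$, the same intertwining identity $(\Id_\h+tD(\frkX))\circ H_t=H\circ(\Id_\g+t\frkL(\frkX))$, and the same conjugation argument using that the Nijenhuis conditions make $\Id_\g+t\frkL(\frkX)$ and $\Id_\h+tD(\frkX)$ Lie-Yamaguti homomorphisms compatible with the action. The only differences are cosmetic: where the paper invokes its Lemma \ref{lem} (a direct check that $\psi_\h^{-1}\circ H\circ\psi_\g$ is again a crossed homomorphism), you recover the same fact from Theorem \ref{thm1} by composing graph maps inside the semidirect product, and you are more explicit than the paper in passing from ``$t$ sufficiently small'' to all $t$ via polynomiality.
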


We need the following lemma to prove the above theorem.

\begin{lem}\label{lem}
Let $H:\g\longrightarrow\h$ be a crossed homomorphism from $\g$ to $\h$ with respect to an action $(\rho,\mu)$. Let $\psi_\g:\g\longrightarrow\g$ and $\psi_\h:\h\longrightarrow\h$ be two Lie-Yamaguti algebra homomorphisms such  that \eqref{crosshomo1} and \eqref{crosshomo2} hold. Then the linear map $\psi_\h^{-1}\circ H\circ\psi_\g$ is a crossed homomorphism from $\g$ to $\h$ with respect to the action $(\rho,\mu)$.
\end{lem}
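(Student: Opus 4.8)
The plan is to set $H' := \psi_\h^{-1}\circ H\circ\psi_\g$ and verify directly the two defining identities \eqref{chomo1} and \eqref{chomo2} of a crossed homomorphism for $H'$. Note first that the statement tacitly presupposes $\psi_\h$ to be invertible (this is how it will be applied, with $\psi_\g=\Id_\g+t\frkL(\frkX)$ and $\psi_\h=\Id_\h+tD(\frkX)$), and that the inverse of a bijective Lie-Yamaguti algebra homomorphism is again a Lie-Yamaguti algebra homomorphism, so $\psi_\h^{-1}$ preserves both $[\cdot,\cdot]_\h$ and $\Courant{\cdot,\cdot,\cdot}_\h$.

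Next I would record the ``conjugated'' equivariance relations obtained from \eqref{crosshomo1}, \eqref{crosshomo2} and \eqref{crosshomo3}. Rewriting $\psi_\h\circ\rho(x)=\rho(\psi_\g(x))\circ\psi_\h$ gives $\rho(x)\circ\psi_\h^{-1}=\psi_\h^{-1}\circ\rho(\psi_\g(x))$, and similarly $\mu(x,y)\circ\psi_\h^{-1}=\psi_\h^{-1}\circ\mu(\psi_\g(x),\psi_\g(y))$ and $D(x,y)\circ\psi_\h^{-1}=\psi_\h^{-1}\circ D(\psi_\g(x),\psi_\g(y))$ for all $x,y\in\g$.

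For \eqref{chomo1}: since $\psi_\g$ preserves the binary bracket, $H'[x,y]_\g=\psi_\h^{-1}H([\psi_\g x,\psi_\g y]_\g)$; applying \eqref{chomo1} for $H$ at the pair $(\psi_\g x,\psi_\g y)$ and then pushing $\psi_\h^{-1}$ through each summand — using the conjugated $\rho$-relation on the first two terms and that $\psi_\h^{-1}$ is a homomorphism on the $[\cdot,\cdot]_\h$-term — yields exactly $\rho(x)H'(y)-\rho(y)H'(x)+[H'x,H'y]_\h$. For \eqref{chomo2} the computation is identical in spirit: $\psi_\g$ preserves the ternary bracket, so $H'\Courant{x,y,z}_\g=\psi_\h^{-1}H(\Courant{\psi_\g x,\psi_\g y,\psi_\g z}_\g)$, and applying \eqref{chomo2} for $H$ together with the conjugated $D$- and $\mu$-relations and the homomorphism property of $\psi_\h^{-1}$ on the $\Courant{\cdot,\cdot,\cdot}_\h$-term produces $D(x,y)H'(z)+\mu(y,z)H'(x)-\mu(x,z)H'(y)+\Courant{H'x,H'y,H'z}_\h$.

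There is no genuine obstacle here; the proof is a bookkeeping computation. The only two points needing a word of care are the remark that $\psi_\h$ must be invertible and that $\psi_\h^{-1}$ inherits the Lie-Yamaguti homomorphism property, and the consistent use of the equivariance hypotheses in their inverted form $\psi_\h^{-1}\circ(\text{operator at }\psi_\g(-))=(\text{operator at }-)\circ\psi_\h^{-1}$ rather than in the form in which they are stated.
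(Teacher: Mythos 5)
Your proposal is correct and follows essentially the same route as the paper's own proof: apply the crossed-homomorphism identities for $H$ at $(\psi_\g x,\psi_\g y)$ (resp.\ $(\psi_\g x,\psi_\g y,\psi_\g z)$) and push $\psi_\h^{-1}$ through each term using the inverted equivariance relations and the homomorphism property of $\psi_\h^{-1}$. Your explicit remarks that $\psi_\h$ must be invertible and that $\psi_\h^{-1}$ inherits the Lie-Yamaguti homomorphism property are points the paper leaves tacit, but they do not change the argument.
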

\begin{proof}
For all $x,y,z\in \g$, we have
\begin{eqnarray*}
~ &&(\psi_\h^{-1}\circ H\circ\psi_\g)\Big([x,y]_\g\Big)\\
~ &=&\psi_\h^{-1}\Big(\rho(\psi_\g(x))H\big(\psi_\g(y)\big)-\rho(\psi_\g(y))H\big(\psi_\g(x)\big)+[H\circ\psi_\g(x),H\circ\psi_\g(y)]_\h\Big)\\
~ &=&\rho(x)\Big(\psi_\h^{-1}\circ H\circ\psi_\g(y)\Big)-\rho(y)\Big(\psi_\h^{-1}\circ H\circ\psi_\g(x)\Big)+[\psi_\h^{-1}\circ H\circ\psi_\g(x),\psi_\h^{-1}\circ H\circ\psi_\g(y)]_\h,
\end{eqnarray*}
and
\begin{eqnarray*}
~ &&(\psi_\h^{-1}\circ H\circ\psi_\g)\Big(\Courant{x,y,z}_\g\Big)\\
~ &=&\psi_\h^{-1}\Big(D(\psi_\g(x),\psi_\g()y)H\big(\psi_\g(z)\big)+\mu(\psi_\g(y),\psi_\g(z))H\big(\psi_\g(x)\big)-\mu(\psi_\g(x),\psi_\g(z))H\big(\psi_\g(y)\big)\\
~ &&+\Courant{H\circ\psi_\g(x),H\circ\psi_\g(y),H\circ\psi_\g(z)}_\h\Big)\\
~ &=&D(x,y)\Big(\psi_\h^{-1}\circ H\circ\psi_\g(z)\Big)+\mu(y,z)\Big(\psi_\h^{-1}\circ H\circ\psi_\g(x)\Big)-\mu(x,z)\Big(\psi_\h^{-1}\circ H\circ\psi_\g(y)\Big)\\
~ &&+\Courant{\psi_\h^{-1}\circ H\circ\psi_\g(x),\psi_\h^{-1}\circ H\circ\psi_\g(y),\psi_\h^{-1}\circ H\circ\psi_\g(y)}_\h,
\end{eqnarray*}
which implies that the linear map $\psi_\h^{-1}\circ H\circ\psi_\g$ is a crossed homomorphism from $\g$ to $\h$ with respect to the action $(\rho,\mu)$.
\end{proof}

\noindent\emph{The proof of Theorem \ref{Nijenhuis}:} For any Nijenhuis element $\frkX\in \mathsf{Nij}(H)$, we define
$$\frkH:=\delta(\frkX).$$
By Definition \ref{Nijelement}, for any $t$, $H_t=H+t\frkH$ satisfies that
\begin{eqnarray*}
H\circ\Big({\Id}_\g+t\frkL(\frkX)\Big)&=&\Big({\Id}_\h+tD(\frkX)\Big)\circ H_t,\\
~\Big({\Id}_\h+tD(\frkX)\Big)\circ \rho(x)&=&\rho\Big({\Id}_\g+t\frkL(\frkX)(y)\Big)\circ\Big({\Id}_\h+tD(\frkX)\Big),\\
~\Big({\Id}_\h+tD(\frkX)\Big)\circ \mu(y,z)&=&\rho\Big({\Id}_\g+t\frkL(\frkX)(y),{\Id}_\g+t\frkL(\frkX)(z)\Big)\circ\Big({\Id}_\h+tD(\frkX)\Big), \quad\forall y,z\in \g.
\end{eqnarray*}
For $t$ sufficiently small, we see that ${\Id}_\g+t\frkL(\frkX)$ and ${\Id}_\h+tD(\frkX)$ are Lie-Yamaguti algebra homomorphisms. Thus, we have
$$H_t=\Big({\Id}_\h+tD(\frkX)\Big)^{-1}\circ H\circ \Big({\Id}_\g+t\frkL(\frkX)\Big).$$
By Lemma \ref{lem}, we deduce that $H_t$ is a crossed homomorphism from $\g$ to $\h$ for $t$ sufficiently small. Thus $\frkH=\delta(\frkX)$ generates a linear deformations of $H$. It is easy to see that this deformation is trivial. This completes the proof.\qed

\subsection{Formal deformations of crossed homomorphisms between Lie-Yamaguti algebras}
In this subsection, we study formal deformations of crossed homomorphisms between Lie-Yamaguti algebras. Let $\mathbb K[[t]]$ be a ring of power series of one variable $t$. If $(\g,[\cdot,\cdot]_\g,\llbracket\cdot,\cdot,\cdot\rrbracket_\g)$ is a Lie-Yamaguti algebra, then there is a Lie-Yamaguti algebra structure over the ring $\mathbb K[[t]]$ on $\g[[t]]]$ given by
\begin{eqnarray*}
\Big[\sum_{i=0}^\infty x_it^i,\sum_{j=0}^\infty y_jt^j\Big]&=&\sum_{s=0}^\infty\sum_{i+j=s} \Big[x_i,y_j\Big]t^s,\\
~\Courant{\sum_{i=0}^\infty x_it^i,\sum_{j=0}^\infty y_jt^j,\sum_{k=0}^\infty z_kt^k}&=&\sum_{s=0}^\infty\sum_{i+j+k=s}\Courant{x_i,y_j,z_k}t^s,\quad \forall x_i,y_j,z_k\in \g.
\end{eqnarray*}

For any action $(\rho,\mu)$ of a Lie-Yamaguti algebra $(\g,[\cdot,\cdot]_\g,\Courant{\cdot,\cdot,\cdot}_\g)$ on another \LYA ~$(\h,[\cdot,\cdot]_\h,\Courant{\cdot,\cdot,\cdot}_\h)$, there is a natural action of the Lie-Yamaguti algebra $\g[[t]]$ on the  $\mathbb K[[t]]$-Lie-Yamaguti algebra $\h[[t]]$ given by
\begin{eqnarray*}
\rho\Big(\sum_{i=0}^\infty x_it^i\Big)\Big(\sum_{k=0}^\infty v_kt^k\Big)&=&\sum_{s=0}^\infty\sum_{i+k=s}\rho(x_i)v_kt^s,\\
\mu\Big(\sum_{i=0}^\infty x_it^i,\sum_{j=0}^\infty y_jt^j\Big)\Big(\sum_{k=0}^\infty v_kt^k\Big)&=&\sum_{s=0}^\infty\sum_{i+j+k=s}\mu(x_i,x_j)v_kt^s, \quad \forall x_i,y_j\in \g,~v_k\in V.
\end{eqnarray*}

Let $H:\g\longrightarrow\h$ be a crossed homomorphism from a Lie-Yamaguti algebra $(\g,[\cdot,\cdot]_\g,\Courant{\cdot,\cdot,\cdot}_\g)$ to anther Lie-Yamaguti algebra $(\h,[\cdot,\cdot]_\h,\Courant{\cdot,\cdot,\cdot}_\h)$ with respect to an action $(\rho,\mu)$. Consider the power series
\begin{eqnarray}
\label{defor:H}H_t=\sum_{i=0}^\infty\frkH_it^i,\quad \frkH_i\in \Hom(\g,\h),
\end{eqnarray}
that is, $H_t\in \Hom_{\mathbb K}(\g,\h)[[t]]=\Hom_{\mathbb K}(\g,\h[[t]])$.

\begin{defi}
Let $H:\g\longrightarrow\h$ be a crossed homomorphism from a Lie-Yamaguti algebra $(\g,[\cdot,\cdot]_\g,\Courant{\cdot,\cdot,\cdot}_\g)$ to anther Lie-Yamaguti algebra $(\h,[\cdot,\cdot]_\h,\Courant{\cdot,\cdot,\cdot}_\h)$ with respect to an action $(\rho,\mu)$. Suppose that $H_t$ is given by \eqref{defor:H} with $\frkH_0=H$, and $H_t$ also satisfies
\begin{eqnarray}
\label{deforC1}H_t[x,y]_\g&=&\rho(x)(H_ty)-\rho(y)(H_tx)+[H_tx,H_ty]_\h,\\
\label{deforC2}H_t\Courant{x,y,z}_\g&=&D(x,y)(H_tz)+\mu(y,z)(H_tx)-\mu(x,z)(H_ty)+\Courant{H_tx,H_ty,H_tz}_\h,
\end{eqnarray}
for all $x,y,z\in \g$. We say that $H_t$ is a {\bf formal deformation} of $H$.
\end{defi}

Substituting the Eq. \eqref{defor:H} into Eqs. \eqref{deforC1} and \eqref{deforC2} and comparing the coefficients of $t^s~(\forall s\geqslant0)$, we have for all $x,y,z \in \g$,
 \begin{eqnarray}
 \label{sys1}&&\sum_{i+j=s,
 \atop i,j\geqslant0}\Big(\rho(x)(\frkH_{s}y)-\rho(y)(\frkH_{s}x)+[\frkH_ix,\frkH_jy]_\h-\frkH_{s}[x,y]_\g\Big)t^s=0,\\
 \label{sys2}&&\sum_{i+j+k=s,
 \atop i,j,k\geqslant0}\Big(D(x,y)(\frkH_{s}z)+\mu(y,z)(\frkH_{s}x)-\mu(x,z)(\frkH_{s}y)\\
 ~\nonumber &&~~~~~\quad\quad+\Courant{\frkH_ix,\frkH_jy,\frkH_kz}_\h-\frkH_{s}\Courant{x,y,z}_\g\Big)t^s=0.
 \end{eqnarray}

 \begin{pro}\label{formal}
 Let $H:\g\longrightarrow\h$ be a crossed homomorphism from a Lie-Yamaguti algebra $(\g,[\cdot,\cdot]_\g,\Courant{\cdot,\cdot,\cdot}_\g)$ to another Lie-Yamaguti algebra $(\h,[\cdot,\cdot]_\h,\Courant{\cdot,\cdot,\cdot}_\h)$ with respect to an action $(\h;\rho,\mu)$. If $H_t=\sum_{i=0}^\infty\frkH_it^i$ is a formal deformation of $H$, then $\delta^H\frkH_1=0$, i.e., $\frkH_1\in \frkC^1_H(\g,\h)$ is a $1$-cocycle of $H$.
 \end{pro}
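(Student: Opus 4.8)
The plan is to extract the coefficient of $t^1$ from the two families of identities \eqref{sys1} and \eqref{sys2} that any formal deformation must satisfy, and to recognize the resulting equations as precisely the statements $\delta^H_{\rm I}(\frkH_1)=0$ and $\delta^H_{\rm II}(\frkH_1)=0$. The only structural input is the definition of the twisted maps $\rho_H,\mu_H,D_H$ from \eqref{newrep1}--\eqref{newrep3} together with skew-symmetry of the brackets of $\h$; this is a bookkeeping argument rather than one with a genuine obstacle.

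First I would set $s=1$ in \eqref{sys1}. Since $\frkH_0=H$, the only pairs $(i,j)$ with $i+j=1$ and $i,j\geqslant 0$ are $(0,1)$ and $(1,0)$, so the bilinear term contributes $[Hx,\frkH_1 y]_\h+[\frkH_1 x,Hy]_\h$, and the vanishing of the $t^1$-coefficient reads
\[
\rho(x)(\frkH_1 y)-\rho(y)(\frkH_1 x)+[Hx,\frkH_1 y]_\h+[\frkH_1 x,Hy]_\h-\frkH_1[x,y]_\g=0 .
\]
Using $\rho_H(x)u=[Hx,u]_\h+\rho(x)u$ from \eqref{newrep1} and the skew-symmetry $[\frkH_1 x,Hy]_\h=-[Hy,\frkH_1 x]_\h$, this is exactly $\rho_H(x)(\frkH_1 y)-\rho_H(y)(\frkH_1 x)-\frkH_1[x,y]_\g=0$, i.e. $\big(\delta^H_{\rm I}(\frkH_1)\big)(x,y)=0$ by \eqref{cohomology3}.

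Next I would set $s=1$ in \eqref{sys2}. The ternary term now contributes $\Courant{\frkH_1 x,Hy,Hz}_\h+\Courant{Hx,\frkH_1 y,Hz}_\h+\Courant{Hx,Hy,\frkH_1 z}_\h$, so the $t^1$-coefficient is
\[
D(x,y)(\frkH_1 z)+\mu(y,z)(\frkH_1 x)-\mu(x,z)(\frkH_1 y)+\Courant{\frkH_1 x,Hy,Hz}_\h+\Courant{Hx,\frkH_1 y,Hz}_\h+\Courant{Hx,Hy,\frkH_1 z}_\h-\frkH_1\Courant{x,y,z}_\g=0 .
\]
Applying the definitions \eqref{newrep2} and \eqref{newrep3} of $\mu_H$ and $D_H$, and rewriting $\Courant{Hx,\frkH_1 y,Hz}_\h=-\Courant{\frkH_1 y,Hx,Hz}_\h$ via skew-symmetry of $\Courant{\cdot,\cdot,\cdot}_\h$ in its first two arguments, the left-hand side collapses to $D_H(x,y)(\frkH_1 z)+\mu_H(y,z)(\frkH_1 x)-\mu_H(x,z)(\frkH_1 y)-\frkH_1\Courant{x,y,z}_\g$, which is $\big(\delta^H_{\rm II}(\frkH_1)\big)(x,y,z)$ by \eqref{cohomology4}. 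Hence $\delta^H(\frkH_1)=\big(\delta^H_{\rm I}(\frkH_1),\delta^H_{\rm II}(\frkH_1)\big)=0$, so $\frkH_1\in\frkC^1_H(\g,\h)$ is a $1$-cocycle of $H$.

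The step I expect to require the most care is simply tracking the position of $\frkH_1$ inside the ternary bracket and the signs produced by skew-symmetry, so that the naive degree-$1$ terms assemble correctly into $\rho_H,\mu_H,D_H$; beyond that there is nothing delicate. As a sanity check one may also note that the $s=0$ component of \eqref{sys1}--\eqref{sys2} merely restates that $\frkH_0=H$ is itself a crossed homomorphism, so it plays no role in the argument.
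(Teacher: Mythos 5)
Your proposal is correct and follows essentially the same route as the paper: extract the $t^1$-coefficient of \eqref{sys1} and \eqref{sys2} using $\frkH_0=H$, and identify the resulting identities with $\delta^H_{\rm I}(\frkH_1)=0$ and $\delta^H_{\rm II}(\frkH_1)=0$ via the definitions of $\rho_H$, $\mu_H$, $D_H$. You merely spell out the skew-symmetry bookkeeping that the paper leaves implicit.
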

 \begin{proof}
 When $s=1$, Eqs. \eqref{sys1} and \eqref{sys2} are equivalent to
 \begin{eqnarray*}
 \frkH_1[x,y]_\g&=&\rho(x)(\frkH_1y)-\rho(y)(\frkH_1x)+[Hx,\frkH_1y]_\h+[\frkH_1x,Hy]_\h,\\
 \frkH_1\Courant{x,y,z}_\g&=&D(x,y)(\frkH_1z)+\mu(y,z)(\frkH_1x)-\mu(x,z)(\frkH_1y)\\
 ~ &&+\Courant{\frkH_1x,Hy,Hz}_\h+\Courant{Hx,\frkH_1y,Hz}_\h+\Courant{Hx,Hy,\frkH_1z}_\h,\quad\forall x,y,z\in \g.
 \end{eqnarray*}
 which implies that $\delta^H(\frkH_1)=0$, i.e., $\frkH_1$ is a $1$-cocycle of $\delta^H$.
 \end{proof}

 \begin{defi}
Let $H:\g\longrightarrow\h$ be a crossed homomorphism from a Lie-Yamaguti algebra $(\g,[\cdot,\cdot]_\g,\Courant{\cdot,\cdot,\cdot}_\g)$ to another Lie-Yamaguti algebra $(\h,[\cdot,\cdot]_\h,\Courant{\cdot,\cdot,\cdot}_\h)$ with respect to an action $(\h;\rho,\mu)$. The $1$-cocycle $\frkH_1$ is called the {\bf infinitesimal} of the formal deformation $H_t=\sum_{i=0}^\infty \frkH_it^i$ of $H.$
 \end{defi}

In the sequel, let us give the notion of equivalent formal deformations of crossed homomorphisms between Lie-Yamaguti algebras.

 \begin{defi}
 Let $H:\g\longrightarrow\h$ be a crossed homomorphism from a Lie-Yamaguti algebra $(\g,[\cdot,\cdot]_\g,\Courant{\cdot,\cdot,\cdot}_\g)$ to another Lie-Yamaguti algebra $(\h,[\cdot,\cdot]_\h,\Courant{\cdot,\cdot,\cdot}_\h)$ with respect to an action $(\rho,\mu)$. Two formal deformations $\bar H_t=\sum_{i=0}^\infty \bar{\frkH_i}t^i$ and $H_t=\sum_{i=0}^\infty \frkH_it^i$, where $\bar{\frkH_0}=\frkH_0=H$ are said to be {\bf equivalent} if there exist $\frkX\in \wedge^2\g,~\phi_i\in \gl(\g)$ and $\varphi_i\in \gl(\h),~i\geqslant2,$ such that for
 \begin{eqnarray}
 \label{equivalent}\phi_t={\Id}_\g+t\frkL(\frkX)+\sum_{i=2}^\infty \phi_it^i,~\quad \varphi_t={\Id}_\h+tD(\frkX)+\sum_{i=2}^\infty \varphi_it^i,
 \end{eqnarray}
 the following conditions are satisfied:
 \begin{eqnarray}
 [\phi_t(x),\phi_t(y)]_\g=\phi_t[x,y]_\g, \quad \Courant{\phi_t(x),\phi_t(y),\phi_t(z)}_\g=\phi_t\Courant{x,y,z}_\g, \quad \forall x,y,z \in \g,
 \end{eqnarray}
 \begin{eqnarray}
 [\varphi_t(u),\varphi_t(v)]_\h=\varphi_t[u,v]_\h, \quad \Courant{\varphi_t(u),\varphi_t(v),\varphi_t(w)}_\h=\varphi_t\Courant{u,v,w}_\h,\quad\forall u,v,w\in \h,
 \end{eqnarray}
 \begin{eqnarray}
 \varphi_t\rho(x)u=\rho(\phi_t(x))(\varphi_t(u)), \quad \varphi_t\mu(x,y)u=\mu(\phi_t(x),\phi_t(y))(\varphi_t(u)), \quad \forall x,y \in \g,~ u \in \h,
 \end{eqnarray}
 and
 \begin{eqnarray}
 \label{eq3}H_t\circ \varphi_t=\phi_t\circ \bar{H_t}
 \end{eqnarray}
 as $\mathbb K[[t]]$-module maps.
 \end{defi}

The following theorem is the second key conclusion in this section.
 \begin{thm}\label{thm2}
 Let $H:\g\longrightarrow\h$ be a crossed homomorphism from a Lie-Yamaguti algebra $(\g,[\cdot,\cdot]_\g,\Courant{\cdot,\cdot,\cdot}_\g)$ to another Lie-Yamaguti algebra $(\h,[\cdot,\cdot]_\h,\Courant{\cdot,\cdot,\cdot}_\h)$ with respect to an action $(\h;\rho,\mu)$. If two formal deformations $\bar H_t=\sum_{i=0}^\infty \bar{\frkH_i}t^i$ and $H_t=\sum_{i=0}^\infty \frkH_it^i$ are equivalent, then their infinitesimals are in the same cohomology classes.
 \end{thm}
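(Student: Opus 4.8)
The plan is to compare the coefficients of $t$ on the two sides of the equivalence relation \eqref{eq3} (read in the form $\varphi_t\circ\bar H_t=H_t\circ\phi_t$, in parallel with \eqref{crho1}) and to recognise the resulting difference of infinitesimals as the coboundary of a $0$-cochain. By Proposition \ref{formal}, both $\frkH_1$ and $\bar{\frkH_1}$ are $1$-cocycles of $H$, so each determines a class in $\huaH^1_H(\g,\h)=\huaZ^1_H(\g,\h)/\huaB^1_H(\g,\h)$; it therefore suffices to show that $\frkH_1-\bar{\frkH_1}$ lies in the image of $\partial=\delta$ acting on $\frkC^0_H(\g,\h)=\wedge^2\g$, i.e. that it equals $\pm\delta(\frkX)$ for the element $\frkX\in\wedge^2\g$ appearing in \eqref{equivalent}.

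First I would substitute the expansions of \eqref{equivalent}, namely $\phi_t={\Id}_\g+t\,\frkL(\frkX)+\sum_{i\geq2}\phi_it^i$ and $\varphi_t={\Id}_\h+t\,D(\frkX)+\sum_{i\geq2}\varphi_it^i$, together with $H_t=\sum_{i\geq0}\frkH_it^i$ and $\bar H_t=\sum_{i\geq0}\bar{\frkH_i}t^i$, where $\frkH_0=\bar{\frkH_0}=H$, into \eqref{eq3}, evaluate both sides at an arbitrary $z\in\g$, and extract the coefficient of $t$. Since $\phi_0={\Id}_\g$, $\varphi_0={\Id}_\h$ and $\frkH_0=\bar{\frkH_0}=H$, that coefficient reads
$$D(\frkX)(Hz)+\bar{\frkH_1}(z)=H\big(\frkL(\frkX)z\big)+\frkH_1(z).$$
Writing $\Courant{\frkX,z}_\g:=\sum_i\Courant{x_i,y_i,z}_\g$ for $\frkX=\sum_i x_i\wedge y_i$, so that $\frkL(\frkX)z=\Courant{\frkX,z}_\g$ by Example \ref{ad}, this rearranges to
$$\frkH_1(z)-\bar{\frkH_1}(z)=D(\frkX)(Hz)-H\Courant{\frkX,z}_\g.$$

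It then remains to recognise the right-hand side as a coboundary. Applying the defining identity \eqref{chomo2} of the crossed homomorphism $H$ to each summand $x_i\wedge y_i$ of $\frkX$ and adding up gives
$$H\Courant{\frkX,z}_\g-D(\frkX)(Hz)=\sum_i\Big(\mu(y_i,z)(Hx_i)-\mu(x_i,z)(Hy_i)+\Courant{Hx_i,Hy_i,Hz}_\h\Big)=\big(\delta(\frkX)\big)z,$$
where the final expression is exactly the $0$-cochain coboundary defined in \eqref{0cochain}. Hence $\frkH_1-\bar{\frkH_1}=-\delta(\frkX)=-\partial(\frkX)\in\huaB^1_H(\g,\h)$, and therefore $[\frkH_1]=[\bar{\frkH_1}]$ in $\huaH^1_H(\g,\h)$, which is the assertion.

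The argument is a single coefficient comparison once the bookkeeping of \eqref{equivalent} is in place, mirroring the linear-deformation identity \eqref{cohomology}; the only points requiring care are reading off the linear coefficients of $\phi_t$ and $\varphi_t$ as $\frkL(\frkX)$ and $D(\frkX)$ and keeping the signs consistent with \eqref{chomo2}. I do not anticipate any genuine obstacle here: the higher-order coefficients of \eqref{eq3} and the compatibility conditions on $\phi_t,\varphi_t$ in \eqref{equivalent} are not needed in order to identify the infinitesimals.
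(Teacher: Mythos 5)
Your proposal is correct and follows essentially the same route as the paper: compare the coefficient of $t$ in the intertwining relation \eqref{eq3} and use the crossed homomorphism identity \eqref{chomo2} to recognise $\bar{\frkH_1}-\frkH_1$ as $\partial(\frkX)$ for the $0$-cochain $\frkX$ from \eqref{equivalent}. The paper's own proof is just a terser version of this same coefficient comparison (and, like you, it silently corrects the order of composition in \eqref{eq3} to match \eqref{crho1}).
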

 \begin{proof}
 Let $(\phi_t,\varphi_t)$ be the maps defined by \eqref{equivalent}, which makes two deformations $\bar H_t=\sum_{i=0}^\infty \bar{\frkH_i}t^i$ and $H_t=\sum_{i=0}^\infty \frkH_it^i$ equivalent. By \eqref{eq3}, we have
 $$\bar{\frkH_1}z=\frkH_1z+\mu(y,z)(Hx)-\mu(x,z)(Hy)+\Courant{Hx,Hy,Hz}_\h=\frkH_1z+\partial(\frkX)(z),\quad \forall z\in \h,$$
 which implies that $\bar{\frkH}_1$ and $\frkH_1$ are in the same cohomology class.
 \end{proof}

\subsection{Order $n$ deformations of crossed homomorphisms between Lie-Yamaguti algebras}
In this subsection, we introduce a special cohomology class associated to an order $n$ deformation of a crossed homomorphism, and show that a deformation of order $n$ of a crossed homomorphism is extendable if and only if this cohomology class in the second cohomology group is trivial. This is why we call this special cohomology class the obstruction class of a deformation of order $n$ being extendable.

 \begin{defi}
 Let $H:\g\longrightarrow\h$ be a crossed homomorphism from a Lie-Yamaguti algebra $(\g,[\cdot,\cdot]_\g,\Courant{\cdot,\cdot,\cdot}_\g)$ to another Lie-Yamaguti algebra $(\h,[\cdot,\cdot]_\h,\Courant{\cdot,\cdot,\cdot}_\h)$ with respect to an action $(\h;\rho,\mu)$. If $H_t=\sum_{i=0}^n\frkH_it^i $ with  $\frkH_0=H$, $\frkH_i\in \Hom_{\mathbb K}(\g,\h)$, $i=1,2,\cdots,n$, defines a $\mathbb K[t]/(t^{n+1})$-module from $\g[t]/(t^{n+1})$ to the Lie-Yamaguti algebra $\h[t]/(t^{n+1})$ satisfying
 \begin{eqnarray}
 \label{ordern1}H_t[x,y]_\g&=&\rho(x)(H_ty)-\rho(y)(H_tx)+[H_tx,H_ty]_\h,\\
\label{ordern2}H_t\Courant{x,y,z}_\g&=&D(x,y)(H_tz)+\mu(y,z)(H_tx)-\mu(x,z)(H_ty)+\Courant{H_tx,H_ty,H_tz}_\h,
 \end{eqnarray}
 for all $x,y,z\in \g$. we say that $H_t$ is an {\bf order $n$ deformation} of $H$.
 \end{defi}

 \begin{rmk}
 The left hand sides of Eqs. \eqref{ordern1} and \eqref{ordern2} hold in the Lie-Yamaguti algebra $\g[t]/(t^{n+1})$ and the right hand sides of  Eqs. \eqref{ordern1} and \eqref{ordern2} make sense since $H_t$ is a $\mathbb K[t]/(t^{n+1})$-module map.
 \end{rmk}

 \begin{defi}
 Let $H:\g\longrightarrow\h$ be a crossed homomorphism from a Lie-Yamaguti algebra $(\g,[\cdot,\cdot]_\g,\Courant{\cdot,\cdot,\cdot}_\g)$ to another Lie-Yamaguti algebra $(\h,[\cdot,\cdot]_\h,\Courant{\cdot,\cdot,\cdot}_\h)$ with respect to an action $(\rho,\mu)$. Let $H_t=\sum_{i=0}^n\frkH_it^i $ be an order $n$ deformation of $H$. If there exists a $1$-cochain $\frkH_{n+1}\in \Hom_{\mathbb K}(\g,\h)$ such that $\widetilde{H_t}=T_t+\frkH_{n+1}t^{n+1}$ is an order $n+1$ deformation of $H$, then we say that $H_t$ is {\bf extendable}.
 \end{defi}

The following theorem is the third key conclusion in this section.
 \begin{thm}\label{ob}
Let $H:\g\longrightarrow\h$ be a crossed homomorphism from a Lie-Yamaguti algebra $(\g,[\cdot,\cdot]_\g,\Courant{\cdot,\cdot,\cdot}_\g)$ to another Lie-Yamaguti algebra $(\h,[\cdot,\cdot]_\h,\Courant{\cdot,\cdot,\cdot}_\h)$ with respect to an action $(\h;\rho,\mu)$. Let $H_t=\sum_{i=0}^n\frkH_it^i $ be an order $n$ deformation of $H$. Then $H_t$ is extendable if and only if the cohomology class ~$[\Ob^H]\in \huaH_H^2(\g,\h)$ is trivial, where $\Ob^H=(\Ob_{\rm I}^H,\Ob_{\rm II}^H)\in \frkC_H^2(\g,\h)$ is defined by
\begin{eqnarray*}
\Ob_{\rm I}^H(x_1,x_2)&=&\sum_{i+j=n+1,\atop 0<i,j<n+1}[\frkH_ix,\frkH_jy]_\h,\\
\Ob_{\rm II}^H(x_1,x_2,x_3)&=&\sum_{i+j+k=n+1,\atop 0<i.j.k<n+1}\Courant{\frkH_ix_1,\frkH_jx_2,\frkH_kx_3}, \quad \forall x_1,x_2,x_3 \in \g.
\end{eqnarray*}
\end{thm}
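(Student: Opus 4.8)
The plan is to read off the coefficient of $t^{n+1}$ in the deformation equations \eqref{ordern1} and \eqref{ordern2}, to recognize it as an inhomogeneous cocycle equation of the form $\delta^H(\frkH_{n+1})=-\Ob^H$, and separately to prove that $\Ob^H$ is always a $2$-cocycle; extendability then becomes the statement that $\Ob^H$ is a $2$-coboundary, that is, that $[\Ob^H]=0$ in $\huaH^2_H(\g,\h)$.

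Concretely, we first substitute $\widetilde{H_t}=H_t+\frkH_{n+1}t^{n+1}$ into \eqref{ordern1} and \eqref{ordern2}. Since $\widetilde{H_t}$ is automatically a $\mathbb K[t]/(t^{n+2})$-module map, it is an order $n+1$ deformation of $H$ precisely when the coefficient of $t^s$ vanishes for every $0\leqslant s\leqslant n+1$. For $0\leqslant s\leqslant n$ these coefficients are exactly the order $n$ deformation relations for $H_t$ (the truncations of \eqref{sys1} and \eqref{sys2}), which hold by hypothesis. Collecting the terms of degree $n+1$ in \eqref{ordern1}, and using $\rho_H(x)u=[Hx,u]_\h+\rho(x)u$ from \eqref{newrep1} together with the skew-symmetry of $[\cdot,\cdot]_\h$, the $t^{n+1}$-component of \eqref{ordern1} reads
$$\frkH_{n+1}[x,y]_\g-\rho(x)(\frkH_{n+1}y)+\rho(y)(\frkH_{n+1}x)-[Hx,\frkH_{n+1}y]_\h-[\frkH_{n+1}x,Hy]_\h-\Ob^H_{\rm I}(x,y)=0,$$
whose left-hand side equals $-\big(\delta^H_{\rm I}(\frkH_{n+1})+\Ob^H_{\rm I}\big)(x,y)$, so the condition is $\delta^H_{\rm I}(\frkH_{n+1})=-\Ob^H_{\rm I}$. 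Likewise, using \eqref{newrep2}, \eqref{newrep3} and the skew-symmetry of $\Courant{\cdot,\cdot,\cdot}_\h$ in its first two arguments, the $t^{n+1}$-component of \eqref{ordern2} amounts to $\delta^H_{\rm II}(\frkH_{n+1})=-\Ob^H_{\rm II}$. Hence $\widetilde{H_t}$ is an order $n+1$ deformation of $H$ if and only if $\delta^H(\frkH_{n+1})=-\Ob^H$; consequently $H_t$ is extendable if and only if $\Ob^H\in\huaB^2_H(\g,\h)$.

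It then remains to show that $\Ob^H=(\Ob^H_{\rm I},\Ob^H_{\rm II})$ is a $2$-cocycle, $\delta^H(\Ob^H)=0$; this is the heart of the proof. We verify it by a direct computation: expand $\delta^H_{\rm I}(\Ob^H)$ and $\delta^H_{\rm II}(\Ob^H)$ by \eqref{cohomology1} and \eqref{cohomology2} with coefficients in the representation $(\h;\rho_H,\mu_H)$, substitute the definitions of $\Ob^H_{\rm I}$ and $\Ob^H_{\rm II}$, and regroup the resulting convolution sums over $i+j=n+1$ (resp.\ $i+j+k=n+1$). Each block is then annihilated by combining three ingredients: the Lie-Yamaguti axioms \eqref{LY1}--\eqref{fundamental} applied inside $\h$; the hypothesis that $(\rho,\mu)$ is an action, so that $\rho(x)u$, $\mu(x,y)u$ and $D(x,y)u$ lie in $C(\h)$ and annihilate every bracket of $\h$ (which collapses most mixed terms); and the order $n$ relations \eqref{sys1} and \eqref{sys2} for $1\leqslant s\leqslant n$, which let us trade the coboundary of a convolution product of the $\frkH_i$ for a convolution of the already-vanishing quantities $\delta^H(\frkH_s)+\sum_{i+j=s}[\frkH_i\cdot,\frkH_j\cdot]_\h$ and their ternary counterparts. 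The main obstacle is precisely the combinatorial bookkeeping here: one must track which summation indices are permitted to equal $0$ or $n+1$, and match the terms that $\delta^H$ produces through $\rho_H$, $\mu_H$, $D_H$ — which carry the extra copies of $H$ — against the $i=0$, $j=0$, $k=0$ boundary contributions of the convolution sums. Structurally this repeats the computation of Proposition \ref{0co} with one more summation variable, and as there I expect every term to cancel.

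Finally, once $\Ob^H\in\huaZ^2_H(\g,\h)$ is established, the class $[\Ob^H]\in\huaH^2_H(\g,\h)=\huaZ^2_H(\g,\h)/\huaB^2_H(\g,\h)$ is well defined, and by the previous step $[\Ob^H]=0$ if and only if $\Ob^H=\delta^H(g)$ for some $1$-cochain $g$, if and only if $\frkH_{n+1}:=-g$ satisfies $\delta^H(\frkH_{n+1})=-\Ob^H$, if and only if $H_t=\sum_{i=0}^n\frkH_it^i$ is extendable. This completes the proof.
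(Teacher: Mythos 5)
Your main argument coincides with the paper's: extract the coefficient of $t^{n+1}$ from \eqref{ordern1}--\eqref{ordern2}, recognize it via \eqref{newrep1}--\eqref{newrep3} as the inhomogeneous cocycle equation $\delta^H(\frkH_{n+1})=-\Ob^H$, and conclude that an extension by $\frkH_{n+1}t^{n+1}$ exists precisely when $\Ob^H\in\huaB_H^2(\g,\h)$. Your bookkeeping --- splitting off the $i=0$ and $j=0$ boundary terms of the convolution sum to produce $[Hx,\frkH_{n+1}y]_\h+[\frkH_{n+1}x,Hy]_\h$ and their ternary analogues --- is exactly what the paper does, and your converse direction (lower-order coefficients hold by hypothesis, the top coefficient by the coboundary equation) likewise matches.

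The one place where you go beyond the paper is the claim that $\Ob^H$ is a $2$-cocycle, which you correctly identify as necessary for $[\Ob^H]\in\huaH_H^2(\g,\h)$ to be well defined --- and which you call the heart of the proof, yet only sketch (``I expect every term to cancel''). As a self-contained proof this step is therefore a genuine gap: a cancellation of this kind must actually be exhibited, since it is exactly where the order-$n$ relations \eqref{sys1}--\eqref{sys2} for $1\leqslant s\leqslant n$ and the action axioms have to be used, and the combinatorics you yourself flag as the main obstacle is left undone. It is only fair to note, however, that the paper's own proof omits this verification entirely: it proves only the equivalence ``extendable iff $\Ob^H$ is a coboundary,'' and in the extendable direction the cocycle property is automatic because $\Ob^H=\delta^H(-\frkH_{n+1})$. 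The missing computation matters precisely for making the statement meaningful when $H_t$ is \emph{not} extendable, so if you keep this step you should carry it out along the lines of Proposition \ref{0co} rather than leave it as an expectation.
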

\begin{proof}
Let $\widetilde{H_t}=\sum_{i=0}^{n+1}\frkH_it^i$ be an extension of $H_t$, then for all $x,y,z \in \g$,
\begin{eqnarray}
\label{n order1}\widetilde{H_t}[x,y]_\g&=&\rho(x)(\widetilde{H_t}y)-\rho(y)(\widetilde{H_t}x)+[\widetilde{H_t}x,\widetilde{H_t}y]_\h,\\
\label{n order2}\widetilde{H_t}\Courant{x,y,z}_\g&=&D(x,y)(\widetilde{H_t}z)+\mu(y,z)(\widetilde{H_t}x)-\mu(x,z)(\widetilde{H_t}y)
+\Courant{\widetilde{H_t}x,\widetilde{H_t}y,\widetilde{H_t}z}_\h.
\end{eqnarray}
Expanding the Eq.\eqref{n order1} and comparing the coefficients of $t^n$ yields that
\begin{eqnarray*}
\frkH_{n+1}[x,y]_\g=\rho(x)(\frkH_{n+1}y)-\rho(y)(\frkH_{n+1}x)+\sum_{i+j=n+1}[\frkH_ix,\frkH_jy]_\h,
\end{eqnarray*}
which is equivalent to
\begin{eqnarray*}
~ &&\rho(x)(\frkH_{n+1}y)-\rho(y)(\frkH_{n+1}x)+[\frkH_{n+1}x,Hy]_\h+[x,\frkH_{n+1}y]_\h\\
~ &&-\frkH_{n+1}[x,y]_\g+\sum_{i+j=n+1 \atop 0<i,j<n+1}[\frkH_ix,\frkH_jy]_\h=0.
\end{eqnarray*}
i.e.,
\begin{eqnarray}
\Ob_{\rm I}^H+\delta_{\rm I}^T(\frkH_{n+1})=0.\label{ob:cocy1}
\end{eqnarray}
Similarly, expanding the Eq.\eqref{n order2} and comparing the coefficients of $t^n$ yields that
\begin{eqnarray}
\Ob_{\rm II}^H+\delta_{\rm II}^T(\frkH_{n+1})=0.\label{ob:cocy2}
\end{eqnarray}
From \eqref{ob:cocy1} and \eqref{ob:cocy2}, we get
$$\Ob^H=\delta^H(-\frkH_{n+1}).$$
Thus, the cohomology class $[\Ob^H]$ is trivial.

Conversely, suppose that the cohomology class $[\Ob^H]$ is trivial, then there exists $\frkH_{n+1}\in \frkC_H^1(\g,\h)$, such that ~$\Ob^H=-\delta^H(\frkH_{n+1}).$ Set $\widetilde{H_t}=H_t+\frkH_{n+1}t^{n+1}$. Then for all $0 \leqslant s\leqslant n+1$, and for all $x,y,z\in \g$, $\widetilde{H_t}$ satisfies
\begin{eqnarray*}
\sum_{i+j=s}\Big(\frkH_s[x,y]_\g-\big(\rho(x)(\frkH_sy)-\rho(y)(\frkH_sx)+[\frkH_ix,\frkH_jy]_\h\big)\Big)=0,\\
\sum_{i+j+k=s}\Big(\frkH_s\Courant{x,y,z}_\g-\big(D(x,y)(\frkH_sz)+\mu(y,z)(\frkH_sx)-\mu(x,z)(\frkH_sy)+\Courant{\frkH_ix,\frkH_jy,\frkH_kz}_\h\big)\Big)=0,
\end{eqnarray*}
which implies that $\widetilde{H_t}$ is an order $n+1$ deformation of $H$. Hence it is an extension of $H_t$.
\end{proof}

\begin{defi}
Let $H:\g\longrightarrow\h$ be a crossed homomorphism from a Lie-Yamaguti algebra $(\g,[\cdot,\cdot]_\g,\Courant{\cdot,\cdot,\cdot}_\g)$ to another Lie-Yamaguti algebra $(\h,[\cdot,\cdot]_\h,\Courant{\cdot,\cdot,\cdot}_\h)$ with respect to an action $(\h;\rho,\mu)$, and $H_t=\sum_{i=0}^n\frkH_it^i$ be an order $n$ deformation of $H$. Then the cohomology class $[\Ob^H]\in \huaH_H^2(\g,\h)$ defined in Theorem {\rm \ref{ob}} is called the {\bf obstruction class } of $H_t$ being extendable.
\end{defi}

\begin{cor}
Let $H:\g\longrightarrow\h$ be a crossed homomorphism from a Lie-Yamaguti algebra $(\g,[\cdot,\cdot]_\g,\Courant{\cdot,\cdot,\cdot}_\g)$ to another Lie-Yamaguti algebra $(\h,[\cdot,\cdot]_\h,\Courant{\cdot,\cdot,\cdot}_\h)$ with respect to an action $(\h;\rho,\mu)$. If $\huaH_H^2(\g,\h)=0$, then every $1$-cocycle in $\huaZ_H^1(\g,\h)$ is the infinitesimal of some formal deformation of $H$.
\end{cor}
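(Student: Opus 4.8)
The plan is to build, starting from a prescribed $1$-cocycle $\frkH_1\in\huaZ_H^1(\g,\h)$, a formal deformation $H_t=\sum_{i=0}^\infty\frkH_it^i$ of $H$ with $\frkH_0=H$ and with the given $\frkH_1$ as infinitesimal, by constructing its truncations order by order and using Theorem~\ref{ob} to pass from each order to the next. First I would set up the base case: the linear map $H_t^{(1)}:=H+t\frkH_1$ is an order $1$ deformation of $H$. Indeed, expanding \eqref{ordern1}--\eqref{ordern2} modulo $t^2$, the coefficient of $t^0$ is exactly the condition that $H$ be a crossed homomorphism, while the coefficient of $t^1$ is precisely $\delta_{\rm I}^H(\frkH_1)=0$ and $\delta_{\rm II}^H(\frkH_1)=0$ (this is the computation recorded just before Proposition~\ref{formal}, with $\rho_H,\mu_H,D_H$ in place of $\rho,\mu,D$). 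Since $\frkH_1\in\huaZ_H^1(\g,\h)$ these hold, so the induction starts.

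For the inductive step, suppose an order $n$ deformation $H_t^{(n)}=\sum_{i=0}^n\frkH_it^i$ of $H$ has been produced, carrying the fixed data $\frkH_0=H$ and $\frkH_1$. By Theorem~\ref{ob}, $H_t^{(n)}$ is extendable to an order $n+1$ deformation if and only if its obstruction class $[\Ob^H]\in\huaH_H^2(\g,\h)$ vanishes; since $\huaH_H^2(\g,\h)=0$ by hypothesis, this class is trivially zero, so there exists $\frkH_{n+1}\in\Hom_{\mathbb K}(\g,\h)$ for which $H_t^{(n+1)}:=H_t^{(n)}+\frkH_{n+1}t^{n+1}$ is an order $n+1$ deformation. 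Iterating over all $n$ yields a coherent family $(\frkH_i)_{i\geqslant0}$; assembling it into $H_t=\sum_{i=0}^\infty\frkH_it^i$ gives a power series satisfying \eqref{deforC1}--\eqref{deforC2} to every order in $t$, i.e.\ a formal deformation of $H$. Its infinitesimal, the coefficient of $t$, is the $1$-cocycle $\frkH_1$ we began with, which is exactly the assertion.

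I do not expect a genuine obstacle here; the corollary is a formal consequence of the extension criterion. The only point that deserves a word is that at each stage $\Ob^H=(\Ob_{\rm I}^H,\Ob_{\rm II}^H)$ must actually represent a class in $\huaH_H^2(\g,\h)$, i.e.\ be a $\delta^H$-cocycle — but this closedness is precisely what is verified inside the proof of Theorem~\ref{ob}, from the order $n$ deformation equations together with the Lie-Yamaguti axioms and the representation identities for $(\h;\rho_H,\mu_H)$. Granting that, I would present the argument as the short induction above, citing Theorem~\ref{ob} for the extension step and the discussion around Proposition~\ref{formal} for identifying the infinitesimal.
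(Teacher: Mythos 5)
Your induction --- establishing that $H+t\frkH_1$ is an order $1$ deformation from the cocycle condition (the $s=1$ computation of Proposition~\ref{formal}), then extending order by order via Theorem~\ref{ob} using $\huaH_H^2(\g,\h)=0$ --- is exactly the intended argument; the paper states this corollary immediately after Theorem~\ref{ob} with no further proof. One caveat: the closedness of $\Ob^H$, which you rightly single out as the only delicate point, is in fact not verified in the paper's proof of Theorem~\ref{ob} (that proof only relates extendability to $\Ob^H$ being a coboundary), so the check that $\Ob^H\in\huaZ_H^2(\g,\h)$ would still have to be supplied for the appeal to $\huaH_H^2(\g,\h)=0$ to go through.
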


\vspace{2mm}

 \noindent
 {\bf Acknowledgements:}
Qiao was partially supported by NSFC grant 11971282. Xu was partially supported by NSFC grant 12201253 and Natural Science Foundation of Jiangsu Province BK20220510.

\end{document}